\RequirePackage{amsmath}

\documentclass[smallextended,envcountsect]{svjour3} 
\smartqed 
\usepackage{graphicx}
\journalname{JOTA}


\usepackage{fullpage}

\usepackage{amssymb,amscd,amsfonts,amstext,bbm, enumerate, dsfont, mathtools}
\usepackage{array,multirow}
\usepackage{xcolor}

\usepackage{tikz}
\usepackage{pgfplots}
\usetikzlibrary{plotmarks,patterns}
\usetikzlibrary{matrix}
\usepgfplotslibrary{groupplots}
\pgfplotsset{compat=newest}

\usepackage{url}

\usepackage{algorithm}
\usepackage[noend]{algpseudocode}

\usepackage[utf8]{inputenc}

\usepackage{hyperref}
\usepackage{cite}

\newtheorem{coro}[theorem]{Corollary}
\newtheorem{prop}[theorem]{Proposition}
\newtheorem{rmk}[theorem]{Remark}
\newtheorem{hyp}{Assumption}

\DeclareMathOperator*{\argmin}{argmin}
\DeclareMathOperator*{\prox}{\mathbf{prox}}

\makeatletter

\makeatother


\newcommand{\RR}{\mathbb{R}}

\newcommand{\FF}{\Phi} 
\DeclareMathOperator{\dist}{dist}
\newcommand{\ip}[2]{\langle #1, #2 \rangle}


\begin{document}

\title{On the Proximal Gradient Algorithm with Alternated Inertia}


\author{Franck Iutzeler   \and  J\'er\^ome Malick }

\institute{Franck Iutzeler,  Corresponding author  \at
              Univ. Grenoble Alpes \\
              franck.iutzeler@univ-grenoble-alpes.fr
              \and
              J\'er\^ome Malick \at
              CNRS, LJK\\
              jerome.malick@univ-grenoble-alpes.fr
}

\date{Received: date / Accepted: date}

\maketitle

\begin{abstract}
In this paper, we investigate the attractive properties of the proximal gradient algorithm with inertia. Notably, we show that using {\em alternated inertia} yields monotonically decreasing functional values, which contrasts with usual accelerated proximal gradient methods. We also provide convergence rates for the algorithm with alternated inertia based on local geometric properties of the objective function. The results are put into perspective by discussions on several extensions and illustrations on common regularized problems.
\end{abstract}
\keywords{Proximal gradient algorithm \and  Accelerated methods \and Kurdyka-{\L}ojasiewicz inequality} 
\subclass{ 65K10 \and 90C30}


\section{Introduction}
\label{sec:intro}


In this paper, we consider the composite convex optimization problem
\begin{equation}
\label{eq:pb}
\min_{x\in\mathbb{R}^n} F(x) :=  f(x) + g(x)
\end{equation}
where the functions $f$ and $g$ are convex, and $f$ is furthermore smooth. This problem can be solved by the proximal gradient algorithm, a special case of Forward-Backward splitting, the iterations of which read
\begin{align}
\label{eq:pg}
x_{k+1} &= \prox_{\gamma_k g} \left( x_k  - \gamma_k \nabla f(x_k) \right).
\end{align}
Recall that the proximal operator $\prox_h\colon\RR^n\rightarrow\RR^n$ of a proper lower semi-continuous convex function $h:\mathbb{R}^n\to\mathbb{R}\cup\{+\infty\}$ is defined as
$$ \prox_h(x) :=  \argmin_{w\in\mathbb{R}^n} \left( h(w) + \frac{1}{2} \left\| w-x \right\|^2  \right)\qquad \text{for all $x\in \mathbb{R}^n$} .$$
The computation of this operator is simple (closed-form or equivalent to a smaller problem) for several interesting functions such as the $\ell_1$ norm, the group norm $\ell_{1,2}$, or the nuclear norm, which are commonly used as regularizers in machine learning and image processing optimization problems; see, e.g., \cite{chambolle1998nonlinear,daubechies2004iterative,hale2008fixed}.  

To improve the convergence of the proximal gradient algorithm, \emph{inertial versions} have been extremely popular, both practically and in terms of theoretical rate \cite{alvarez2004weak,lorenz2014inertial,chambolle2015convergence,aujol2015stability,attouch2016rate}. Adding inertia consists in constructing the next iterate $x_{k+1}$ by combining the outputs $y_{k+1}$ and $y_k$ of the last two proximal steps. Specifically, given a sequence of real non-negative numbers $(\alpha_k)$, an iteration of the inertial proximal gradient algorithm can be written as
\begin{equation}\label{eq:ipg}
 y_{k+1}  =  \prox_{\gamma_k g} \left( x_k  - \gamma_k \nabla f(x_k) \right) ~, ~~~ x_{k+1}    =  y_{k+1} + \alpha_{k+1} ( y_{k+1} - y_k) .
\end{equation}
The design of the inertial sequence $(\alpha_k)$ affects greatly the performance of the resulting algorithm, and different options are investigated in the literature.
Popular choices include Nesterov's optimal sequence \cite{nesterov1983method,guler1992new} which leads to the FISTA algorithm \cite{beck2009fast} or a variant used for proving the iterates convergence \cite{chambolle2015convergence,attouch2016rate}. These sequences have the form

\begin{equation}\label{eq:nesta}
\alpha_{k+1} = \frac{t_k -1}{t_{k+1}}  ~~~~~~~~\text{ with } t_0 = 0 \text{ and } ~~~~
\begin{array}{rl}
  &   t_{k+1} = \displaystyle\frac{1+\sqrt{1+4t_k^2}}{2} \\[2ex]
~~~ \text{ or ~ } ~~~  &  t_{k+1} = \displaystyle \frac{k+a}{a} ~~~~~~ \text{ with } a>2.\\[2ex]
\end{array}
\end{equation}
%

Both choices lead to increasing sequences going to $1$ at rate $1/k$ and are proven to accelerate the worst-case convergence rate of the algorithm from $\mathcal{O}(1/k)$ to $\mathcal{O}(1/k^2)$. Another variant with $t_{k+1} = ((k+a)/a)^d$ where $d\in]0,1]$ and $a>\max\{1,(2d)^{\frac{1}{d}} \}$ was shown to be efficient, when the proximal operator is computed approximately \cite{aujol2015stability}. Finally, if $f$ is in addition $\mu$-strongly convex, then the optimal linear rate is attained for fixed $\alpha = (1-\sqrt{\mu/L})/(1+\sqrt{\mu/L}) $ ~\cite{nesterov2005smooth}.

\paragraph{Quest for Monotonicity.}

An attractive feature of the vanilla proximal gradient algorithm is its monotonicity; more precisely, iterates generated by \eqref{eq:pg} satisfy both
\begin{itemize}
\item monotonicity of the iterates in the sense of Fej\'{e}r: $\|x_{k+1}-x\| \leq \|x_{k}-x\|$ for any optimal $x$

\smallskip

\item monotonicity of the functional values: $F(x_{k+1})\leq F(x_k)$. 
\end{itemize}
These properties are well-known and obtained directly from contraction and descent results recalled in Section~\ref{sec:pre}. On the contrary, acceleration by inertia breaks down monotonicity as the iterates generated by inertial variants can circle or oscillate around the set of minimizers \cite{mainge2008convergence,beck2009fastb,lorenz2014inertial}. These kinds of behaviors make accelerated methods sometimes slower than their unaccelerated counterparts (see for instance the non-negative least squares problem in~\cite{malitsky2016first}).

Monotonicity (and in particular functional monotonicity) is a highly desirable feature in optimization, in both theory and practice; see for instance the quests for descent in \cite{correa1993,bioucas2007new,fuentes2012descentwise}. For composite optimization, several algorithms based on descent tests have been proposed to fix the non-monotonicity of accelerated proximal gradient methods, notably: MTwist \cite{bioucas2007new}, MFISTA \cite{beck2009fastb}, or Monotonous APG \cite{li2015accelerated}. However, all these methods rely on additional function evaluations, or even additional computations, compared to the initial proximal gradient algorithm.

Another strategy is to use inertia every other step of the method: the corresponding algorithm, called proximal gradient with {\em alternated} inertia, can be formulated as \eqref{eq:ipg} with $\alpha_k = 0$ for $k$ even, and $\alpha_k > 0$ for $k$ odd. Alternated inertia was introduced in \cite{mu2015note} as a variation of the proximal point algorithm, and exhibited attractive performances in practice \cite{iutzeler2016generic}; and it was shown to have potential to achieve good rates (better than both inertia and over-relaxation in some cases) while recovering iterates monotonicity for some range of inertial coefficients. Unfortunately, in the case of the proximal gradient, the range guaranteeing iterates monotonicity is typically $[0,0.5]$, which prevents from considering usual inertial sequences. Moreover, functional monotonicity has not been investigated yet.


\paragraph{Contributions and Outline.}

In this paper, we show that the alternated inertial proximal gradient algorithms enjoys a monotonic functional descent for a range of inertial coefficients encompassing the popular sequences of the literature. Furthermore, building on this monotonic decrease, we provide a complexity analysis based on the geometry of the function around optimal points. In addition, we discuss several extensions of the alternated inertial proximal algorithm for i) being resilient to undetected strong convexity, ii) handling non-convex $g$, and iii) recovering a $1/k^2$ worst-case rate. Finally, we provide numerical examples illustrating the attractive properties of alternated inertia compared to existing (accelerated) proximal gradient algorithms.

The outline of this paper is as follows. Section\;\ref{sec:pre} sets the framework with the assumptions and useful known results. Section\;\ref{sec:main} is the core of the paper and presents the study of the alternated inertial proximal gradient algorithm: its monotonicity and the resulting complexity analysis. This analysis is based on the standard Kurdyka-{\L}ojasiewicz inequality but in an original way: a by-product of our developments is indeed a refined analysis of functional convergence rates for general weakly decreasing sequences. Finally, Section\,\ref{sec:ext} discusses extensions and Section~\ref{sec:num} presents illustrative numerical experiments.

\section{Preliminaries: Assumptions and Recalls on the Proximal Gradient algorithm}
\label{sec:pre}

In this paper, the working space is $\RR^n$ with the standard inner product $\ip{\cdot}{\cdot}$ and associated Euclidean norm~$\|\cdot \|$. Although all forthcoming results generalize to Hilbert spaces, we restrict ourselves to finite-dimensional spaces for simplicity and for being closer to considered applications. We use in our developments standard terminology, notation, and tools from convex analysis and operator theory; we refer to the textbooks \cite{hiriart-lemarechal-1993} and \cite{bauschke2011convex}. In particular, for a convex function $h$, we denote by $\partial h (x)$ the subdifferential of $h$ at $x$.

We make the following standard assumptions on the functions involved in the composite problem~\eqref{eq:pb}.
\begin{hyp}
\label{hyp:gen}
\begin{itemize}
\item[(i)] $f:\mathbb{R}^n\to\mathbb{R}$ is a convex $L$-smooth function (for $L>0$) i.e.~it is differentiable and
$$\| \nabla f(x) - \nabla f(y) \| \leq L \| x-y\| \qquad \text{for all} ~ x,y\in\mathbb{R}^n; $$ 
\item[(ii)] $g:\mathbb{R}^n\to\mathbb{R}\cup\{+\infty\}$ is a {convex} and proper lower semi-continuous function:
\smallskip
\item[(iii)] the set of optimal points $X^\star := \argmin_x F(x) \neq \emptyset$ is closed and $F^\star := F(x^\star) > - \infty$ for any $x^\star\in X^\star$.
\end{itemize}
\end{hyp}

These three points will be assumed to hold for all the results stated in this paper (except for \emph{(ii)} in Section\;\ref{sec:noncvx} dealing with the non-convex case). For simplicity, we will also assume that the step-sizes of the algorithms are fixed over time: $\gamma_k = \gamma >0$ for all $k$; all the results presented in this paper extend easily to the case when step-sizes are bounded consistently with the Lipschitz constant $L$. 
Restricting to fixed step-sizes will slightly enlighten the statements and allow us to focus on the inertia parameters $(\alpha_k)$ which are at the heart of our analysis. 

We now define the proximal gradient operator for $F = f+g$ (for the step $\gamma>0$) by 
\begin{equation*}
\mathsf{T}_\gamma(x) := \prox_{\gamma g} \left( x  - \gamma \nabla f(x) \right).
\end{equation*}
An iteration of the proximal gradient algorithm \eqref{eq:pg} can thus be written compactly $x_{k+1} = \mathsf{T}_\gamma(x_k)$ and the set $X^\star$ of the optimal solutions of \eqref{eq:pb} coincide with the fixed points set of $\mathsf{T}_\gamma$.

In our analysis, we will rely on some basic results about this operator that we recall in the three next lemmas. We refer respectively to \cite[Th.\;25.8/Cor.\;27.9]{bauschke2011convex}, \cite[Lemma 1]{chambolle2015convergence}, and \cite[Prop.~13]{bolte2015error} for these results but out of convenience for curious readers, proofs of the last two lemmas are given in the appendix.


\begin{lemma}[Contraction]
\label{lem:op}
Take $\gamma\in]0,\frac{2}{L}[$. Then, the proximal gradient operator $\mathsf{T}_{\gamma}$ is $\nu$-averaged:
$$ \|\mathsf{T}_\gamma(x) - \mathsf{T}_\gamma(y)\|^2 +\frac{1-\nu}{\nu} \| (x - \mathsf{T}_\gamma (x)) - (y  - \mathsf{T}_\gamma(y) ) \|^2 \leq \|x-y\|^2 \qquad \text{\;for all $x,y\in \mathbb{R}^n$}
$$
with $\nu = 2/\big(1+2\min\{1,   1/(\gamma L)\}\big) \in[2/3,1[$.
\end{lemma}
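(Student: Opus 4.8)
The plan is to factor the proximal gradient operator as $\mathsf{T}_\gamma = \prox_{\gamma g}\circ G$, where $G:x\mapsto x-\gamma\nabla f(x)$ is the forward step, and to bound the averagedness of each factor separately before combining them. The backward step is immediate: as the resolvent of the maximally monotone operator $\partial(\gamma g)$, the operator $\prox_{\gamma g}$ is firmly nonexpansive, hence $1/2$-averaged. For the forward step, the crucial ingredient is the Baillon--Haddad theorem: convexity and $L$-smoothness of $f$ imply that $\nabla f$ is $(1/L)$-cocoercive, i.e. $\ip{\nabla f(x)-\nabla f(y)}{x-y}\ge(1/L)\|\nabla f(x)-\nabla f(y)\|^2$. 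Since $(x-Gx)-(y-Gy)=\gamma\big(\nabla f(x)-\nabla f(y)\big)$, expanding $\|Gx-Gy\|^2$ and inserting this cocoercivity bound shows that $G$ is $(\gamma L/2)$-averaged for every $\gamma\in\,]0,2/L[$.

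The second step is to put both factors under a common averagedness constant. Because an $\alpha$-averaged operator is also $\alpha'$-averaged for every $\alpha'\in[\alpha,1]$ (the defining inequality only gets weaker as the coefficient $\tfrac{1-\alpha'}{\alpha'}$ decreases), both $\prox_{\gamma g}$ and $G$ are $\alpha$-averaged with $\alpha:=\tfrac12\max\{1,\gamma L\}\in[1/2,1[$. Now I would invoke the composition rule for averaged operators: the composition of two $\alpha$-averaged operators is $\tfrac{2\alpha}{1+\alpha}$-averaged (the general formula $\tfrac{\alpha_1+\alpha_2-2\alpha_1\alpha_2}{1-\alpha_1\alpha_2}$ in the case $\alpha_1=\alpha_2=\alpha$). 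Substituting $\alpha=\tfrac12\max\{1,\gamma L\}$,
\[
\nu:=\frac{2\alpha}{1+\alpha}=\frac{\max\{1,\gamma L\}}{1+\tfrac12\max\{1,\gamma L\}}=\frac{2}{1+2\min\{1,1/(\gamma L)\}},
\]
which indeed ranges over $[2/3,1[$ as $\gamma$ ranges over $]0,2/L[$. Writing "$\nu$-averaged" in the quadratic form then gives precisely the stated inequality.

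The only ingredients that are not one-liners are the averagedness of $G$ — which rests on Baillon--Haddad cocoercivity and an elementary expansion — and the composition rule, proved by expressing each factor as a convex combination of the identity and a nonexpansive operator and tracking how these combine; both are classical and are exactly the results cited from \cite{bauschke2011convex}, so in the write-up I would simply quote them, the remaining work being the arithmetic that recasts the composed constant into the $\min$-form of the statement. As a self-contained alternative (and a useful cross-check), one can bypass the composition lemma: set $x^+=\mathsf{T}_\gamma(x)$, $y^+=\mathsf{T}_\gamma(y)$, use the subgradient inclusion $\tfrac1\gamma(x-x^+)-\nabla f(x)\in\partial g(x^+)$ together with monotonicity of $\partial g$ and cocoercivity of $\nabla f$, and complete the square in the residual difference $(x-x^+)-(y-y^+)$; this proves the averagedness inequality directly, even with the sharper constant $2/(4-\gamma L)\le\nu$, whence the claim.
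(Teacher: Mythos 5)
Your argument is correct and is precisely the standard proof behind the reference the paper gives for this lemma: the paper offers no in-house proof of Lemma~\ref{lem:op} (it cites Th.~25.8/Cor.~27.9 of Bauschke--Combettes and only proves Lemmas~\ref{lem:1} and~\ref{lem:klopt} in the appendix), and that cited result is obtained exactly as you do, by composing the $1/2$-averaged backward step with the $(\gamma L/2)$-averaged forward step coming from Baillon--Haddad cocoercivity. Your arithmetic recovering $\nu=2/\big(1+2\min\{1,1/(\gamma L)\}\big)$ from the composition constant $2\alpha/(1+\alpha)$ with $\alpha=\tfrac{1}{2}\max\{1,\gamma L\}$ checks out, and the sharper constant $2/(4-\gamma L)$ from your direct alternative is consistent with the statement, since a $\nu'$-averaged operator with $\nu'\le\nu<1$ is a fortiori $\nu$-averaged.
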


\begin{lemma}[Functional descent]
\label{lem:1}
Take $\gamma>0$. Then, 
\begin{equation*}
F( \mathsf{T}_\gamma(x) ) + \frac{ (1-\gamma L)}{2\gamma}  \left\|  \mathsf{T}_\gamma(x) - x  \right\|^2   + \frac{1}{2\gamma} \left\| \mathsf{T}_\gamma(x) - y \right\|^2 \leq F(y)  + \frac{1}{2\gamma} \left\| x - y \right\|^2\qquad \text{\;for all $x,y\in \mathbb{R}^n$}.
\end{equation*}
\end{lemma}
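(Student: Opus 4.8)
The plan is to prove Lemma~\ref{lem:1} by unfolding the definition of the proximal step hidden inside $\mathsf{T}_\gamma(x)$ and combining it with the descent lemma for the smooth part $f$. Write $z := \mathsf{T}_\gamma(x) = \prox_{\gamma g}(x - \gamma\nabla f(x))$. The first-order optimality condition for the proximal minimization gives $x - \gamma\nabla f(x) - z \in \gamma\,\partial g(z)$, i.e.\ $\frac{1}{\gamma}(x - z) - \nabla f(x) \in \partial g(z)$. By convexity of $g$, for any $y$ we then get
\begin{equation*}
g(z) + \left\langle \tfrac{1}{\gamma}(x-z) - \nabla f(x),\, y - z \right\rangle \leq g(y).
\end{equation*}

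Next I would handle $f$. Since $f$ is convex, $f(z) \le f(x) + \langle \nabla f(x), z - x\rangle + \langle \nabla f(x), \text{(nothing more needed)}\rangle$ is not quite what we want; instead use the standard $L$-smoothness descent inequality $f(z) \le f(x) + \langle \nabla f(x), z-x\rangle + \frac{L}{2}\|z-x\|^2$ together with convexity $f(x) \le f(y) + \langle \nabla f(x), x - y\rangle$. Adding these two yields
\begin{equation*}
f(z) \leq f(y) + \langle \nabla f(x), z - y\rangle + \tfrac{L}{2}\|z-x\|^2.
\end{equation*}
Now add this to the $g$-inequality above. The $\langle \nabla f(x), \cdot\rangle$ terms cancel (one has $z-y$, the other $-(y-z)$), leaving
\begin{equation*}
F(z) \leq F(y) + \tfrac{1}{\gamma}\langle x - z,\, y - z\rangle + \tfrac{L}{2}\|z-x\|^2.
\end{equation*}

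The final step is purely algebraic: apply the polarization identity $\langle x-z, y-z\rangle = \frac{1}{2}\|x-z\|^2 + \frac{1}{2}\|y-z\|^2 - \frac{1}{2}\|x-y\|^2$ to the cross term. Multiplying through, the right-hand side becomes $F(y) + \frac{1}{2\gamma}\|x-z\|^2 + \frac{1}{2\gamma}\|z-y\|^2 - \frac{1}{2\gamma}\|x-y\|^2 + \frac{L}{2}\|z-x\|^2$. Rearranging so that everything except $F(y)$ and $\frac{1}{2\gamma}\|x-y\|^2$ sits on the left, and collecting the two $\|x-z\|^2$ terms into $\left(\frac{1}{2\gamma} - \frac{L}{2}\right)\|x-z\|^2 = \frac{1-\gamma L}{2\gamma}\|x-z\|^2$, gives exactly the claimed inequality with $z = \mathsf{T}_\gamma(x)$.

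There is no real obstacle here; the only point requiring a little care is getting the signs and the pairing of arguments right when adding the $f$- and $g$-inequalities so that the gradient terms cancel cleanly, and then tracking the coefficient of $\|x-z\|^2$ through the polarization step to land on $\frac{1-\gamma L}{2\gamma}$ rather than something off by a factor. One subtlety worth noting: the $g$-subgradient inequality only needs $g$ convex, proper, and lower semi-continuous (so that the $\prox$ is well-defined and the optimality condition holds), and the $f$-part only needs convexity plus $L$-smoothness, so the hypotheses of Assumption~\ref{hyp:gen}\,(i)--(ii) are exactly what is used; no step-size restriction is needed, consistent with the statement allowing any $\gamma > 0$.
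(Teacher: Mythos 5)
Your strategy is sound and is essentially the paper's own proof in different packaging: the paper obtains the key inequality by viewing $\mathsf{T}_\gamma(x)$ as the minimizer of the $\tfrac{1}{\gamma}$-strongly convex surrogate $s_x(w)=f(x)+g(w)+\langle \nabla f(x), w-x\rangle+\tfrac{1}{2\gamma}\|w-x\|^2$ and invoking the three-point property $s_x(z)+\tfrac{1}{2\gamma}\|z-y\|^2\le s_x(y)$, whereas you unwind that same fact into the prox optimality condition, the subgradient inequality for $g$, and a polarization identity. The treatment of $f$ (descent lemma at $z$ around $x$, convexity to pass from $x$ to $y$) is identical in both arguments, and no step-size restriction is needed in either, as you correctly note.

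There is, however, a sign error that you must fix for the computation to close. From $v:=\tfrac{1}{\gamma}(x-z)-\nabla f(x)\in\partial g(z)$ and $g(z)+\langle v, y-z\rangle\le g(y)$, the cross term lands on the right-hand side with a \emph{minus} sign: after the gradient terms cancel you get
\[
F(z)\;\le\;F(y)\;-\;\tfrac{1}{\gamma}\langle x-z,\,y-z\rangle\;+\;\tfrac{L}{2}\|z-x\|^2,
\]
not $+\tfrac{1}{\gamma}\langle x-z, y-z\rangle$ as you wrote. This matters: with your stated sign, polarization puts $+\tfrac{1}{2\gamma}\|x-z\|^2$ and $+\tfrac{L}{2}\|z-x\|^2$ on the \emph{same} side, and moving them to the left would produce the coefficient $-\bigl(\tfrac{1}{2\gamma}+\tfrac{L}{2}\bigr)$ rather than the claimed $\tfrac{1}{2\gamma}-\tfrac{L}{2}=\tfrac{1-\gamma L}{2\gamma}$, and the $\tfrac{1}{2\gamma}\|x-y\|^2$ term would also end up on the wrong side. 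With the correct minus sign, polarization gives $-\tfrac{1}{2\gamma}\|x-z\|^2-\tfrac{1}{2\gamma}\|y-z\|^2+\tfrac{1}{2\gamma}\|x-y\|^2$ on the right, and moving the first two terms to the left yields exactly the lemma, with the $\|x-z\|^2$ coefficient $\tfrac{1}{2\gamma}-\tfrac{L}{2}$ arising from one term on each side. The idea is complete; only this bookkeeping needs correcting.
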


\begin{lemma}[Bound on distance to subdifferential]
\label{lem:klopt}
Take $\gamma>0$. Then, 
\begin{equation*}
\dist(0,\partial F(\mathsf{T}_\gamma(x) )) \leq \frac{L\gamma + 1}{\gamma} \|x-\mathsf{T}_\gamma(x) \|\qquad \text{\;for all $x\in \mathbb{R}^n$}.
\end{equation*}
\end{lemma}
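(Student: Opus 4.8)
The plan is to work directly from the first-order optimality condition characterizing the proximal step and then invoke the sum rule for subdifferentials. Write $x^+ := \mathsf{T}_\gamma(x) = \prox_{\gamma g}(x - \gamma\nabla f(x))$. By definition of the proximal operator and Fermat's rule, $x^+$ is the unique minimizer of $w \mapsto g(w) + \frac{1}{2\gamma}\|w - (x - \gamma\nabla f(x))\|^2$, which gives the inclusion
\begin{equation*}
\frac{1}{\gamma}\bigl( x - \gamma\nabla f(x) - x^+ \bigr) \in \partial g(x^+).
\end{equation*}

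Next, since $f$ is finite-valued and differentiable on all of $\RR^n$, the subdifferential sum rule yields $\partial F(x^+) = \nabla f(x^+) + \partial g(x^+)$. Adding $\nabla f(x^+)$ to the inclusion above therefore exhibits the explicit element
\begin{equation*}
\xi := \nabla f(x^+) - \nabla f(x) + \frac{1}{\gamma}\bigl( x - x^+ \bigr) \in \partial F(x^+).
\end{equation*}

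It then remains to bound $\|\xi\|$. By the triangle inequality and the $L$-Lipschitz continuity of $\nabla f$ from Assumption~\ref{hyp:gen}(i),
\begin{equation*}
\|\xi\| \leq \|\nabla f(x^+) - \nabla f(x)\| + \frac{1}{\gamma}\|x - x^+\| \leq L\|x^+ - x\| + \frac{1}{\gamma}\|x - x^+\| = \frac{L\gamma + 1}{\gamma}\,\|x - \mathsf{T}_\gamma(x)\|.
\end{equation*}
Since $\dist(0,\partial F(x^+)) \leq \|\xi\|$ by definition of the distance, this proves the claim.

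There is no real obstacle here: the argument is a short computation. The only point requiring (minor) care is the use of the exact subdifferential calculus rule $\partial F = \nabla f + \partial g$, which is legitimate precisely because $f$ is everywhere differentiable and real-valued, so no constraint qualification is needed; with this in hand the estimate follows immediately from smoothness and the triangle inequality.
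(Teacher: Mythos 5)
Your proof is correct and follows essentially the same route as the paper's: both extract the element $\nabla f(\mathsf{T}_\gamma(x)) - \nabla f(x) + \frac{1}{\gamma}(x - \mathsf{T}_\gamma(x))$ of $\partial F(\mathsf{T}_\gamma(x))$ from the optimality condition of the proximal step and bound its norm via the triangle inequality and $L$-smoothness. Your explicit remark that the sum rule $\partial F = \nabla f + \partial g$ holds without a constraint qualification because $f$ is everywhere differentiable is a welcome clarification that the paper leaves implicit.
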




As a direct consequence of these lemmas, we retrieve the well-known monotonicity results for the proximal gradient algorithm.
For $\gamma\in]0,2/L[$ and any $x^\star \in X^\star$, we have indeed that the sequence generated by \eqref{eq:pg} satisfies
\begin{align*}
     \|x_{k+1} - x^\star\|^2 &\leq \|x_k-x^\star\|^2 - \frac{1-\nu}{\nu} \| x_{k+1} - x_k \|^2 \\
    F(x_{k+1}) &\leq F(x_k) - \frac{2-\gamma L}{2\gamma} \|x_{k+1}-x_k\|^2\\
    &\leq  F(x_k) - \frac{(2-\gamma L)\gamma}{(1+\gamma L)^2}\dist(0,\partial F(x_{k+1} ))^2. 
\end{align*}

Thus, $(x_k)$ converges to a point in $X^\star$ monotonically in both \emph{Fej\'{e}r} and \emph{functional} sense. As already mentioned in the introduction, inertial versions of the algorithm lose monotonicity, which is, roughly speaking, due to local changes in the shape of the function (we refer to the references given in the introduction and numerical experiments). 
We will see in the next section that alternating inertia allows to keep some monotonicity properties of the initial proximal gradient method.




\section{Proximal Gradient Algorithm with Alternated Inertia}\label{sec:main}

In this section, we study the proximal gradient algorithm with alternated inertia, introduced in \cite{mu2015note} and \cite{iutzeler2016generic}. With our notations, an iteration of this algorithm reads, for $k$ even, as
\begin{align}\label{eq:altin}
\left\{ \begin{array}{ll} y_{k+1} = \mathsf{T}_\gamma(x_k)  & ~~~ y_{k+2} = \mathsf{T}_\gamma(x_{k+1}) \\ x_{k+1} =  y_{k+1} + \alpha_k( y_{k+1} - y_{k} ) &  ~~~x_{k+2} = y_{k+2}  \end{array} \right. .
\end{align}
In Section \ref{sec:ite}, we briefly discuss known results about the iterates monotonicity for this method. We then derive in Section \ref{sec:func} a functional monotonicity result for a wide range of inertial parameters. Finally, in Section~\ref{sec:KL}, we leverage this monotonicity to provide convergence rates using geometrical properties of the objective function. This study is also a chance to establish an new generic convergence analysis using Kurdyka-{\L}ojasiewicz gradient inequality (see forthcoming Theorem~\ref{th:ratekl2}).

\subsection{Recalls about Iterates Monotonicity}
\label{sec:ite}

The iterates monotonicity of the proximal gradient with alternated inertia is established in \cite[Lemma~6]{iutzeler2016generic}.

\begin{theorem}[Iterates Monotonicity]
Take $\gamma\in]0,\frac{2}{L}[$. If the inertial sequence $(\alpha_k)$ verifies
\hspace*{-0.05cm}
$$ 0 \leq \alpha_k \leq \min\left\{1 ,  \frac{1}{\gamma L}\right\} -\frac{1}{2}  \qquad \text{for all $k>0$,}$$
\hspace*{-0.05cm}
then the sequence $(y_{2k})$ produced by \eqref{eq:altin} converges Fej\'{e}r monotonically to a point in $X^\star$.
\end{theorem}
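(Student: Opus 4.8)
The plan is to collapse the two-step recursion \eqref{eq:altin} to a single map acting on the even-indexed subsequence and then control it by invoking the averagedness Lemma~\ref{lem:op} twice along one period. First, note that for $m\ge 1$ the previous even block sets $x_{2m}=y_{2m}$, so writing $z_m:=y_{2m}$ we get $z_{m+1}=\mathsf{T}_\gamma(w_m)$ with the over-relaxed point $w_m:=(1+\alpha_{2m})\,\mathsf{T}_\gamma(z_m)-\alpha_{2m}z_m$. The key (purely algebraic) observation is that the hypothesis is exactly $0\le\alpha_{2m}\le\frac{1-\nu}{\nu}$: indeed, for $\nu$ as in Lemma~\ref{lem:op} one has $\frac{1-\nu}{\nu}=\min\{1,1/(\gamma L)\}-\frac12>0$ whenever $\gamma\in\,]0,2/L[$.

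Fix $x^\star\in X^\star$, recall $x^\star=\mathsf{T}_\gamma(x^\star)$, and abbreviate $\alpha=\alpha_{2m}$, $\beta=\frac{1-\nu}{\nu}$. Applying Lemma~\ref{lem:op} to the pair $(z_m,x^\star)$ gives $\|\mathsf{T}_\gamma(z_m)-x^\star\|^2+\beta\|z_m-\mathsf{T}_\gamma(z_m)\|^2\le\|z_m-x^\star\|^2$, and applying it to $(w_m,x^\star)$ gives $\|z_{m+1}-x^\star\|^2\le\|w_m-x^\star\|^2-\beta\|w_m-z_{m+1}\|^2$. Then I would expand $\|w_m-x^\star\|^2$ with the identity $\|\lambda a+(1-\lambda)b\|^2=\lambda\|a\|^2+(1-\lambda)\|b\|^2-\lambda(1-\lambda)\|a-b\|^2$ for $\lambda=1+\alpha$, $a=\mathsf{T}_\gamma(z_m)-x^\star$, $b=z_m-x^\star$ (this identity is valid for any real $\lambda$, in particular $\lambda>1$), substitute the first inequality, and collect terms, obtaining $\|w_m-x^\star\|^2\le\|z_m-x^\star\|^2-(1+\alpha)(\beta-\alpha)\|z_m-\mathsf{T}_\gamma(z_m)\|^2$. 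Combining with the second inequality yields
$$
\|z_{m+1}-x^\star\|^2\le\|z_m-x^\star\|^2-(1+\alpha_{2m})(\beta-\alpha_{2m})\|z_m-\mathsf{T}_\gamma(z_m)\|^2-\beta\|w_m-z_{m+1}\|^2,
$$
and since $\alpha_{2m}\le\beta$ both subtracted quantities are nonnegative, which is the claimed Fej\'er monotonicity of $(y_{2m})=(z_m)$ (for $m\ge 1$; the unconstrained index $k=0$ and the term $y_0$ are irrelevant to the limit).

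It remains to promote Fej\'er monotonicity to convergence to a single point of $X^\star$. Being Fej\'er monotone with respect to the nonempty set $X^\star$, $(z_m)$ is bounded, and telescoping the displayed inequality gives $\sum_m\beta\|w_m-z_{m+1}\|^2<\infty$, hence $\|w_m-\mathsf{T}_\gamma(w_m)\|=\|w_m-z_{m+1}\|\to 0$. Take a cluster point $z_{m_j}\to\bar z$; passing to a further subsequence along which $\alpha_{2m_j}\to\bar\alpha\in[0,\beta]$ and using continuity of the nonexpansive operator $\mathsf{T}_\gamma$, we get $w_{m_j}\to\bar w:=(1+\bar\alpha)\mathsf{T}_\gamma(\bar z)-\bar\alpha\bar z$ with $\mathsf{T}_\gamma(\bar w)=\bar w$, i.e. $\bar w\in X^\star$, and $z_{m_j+1}=\mathsf{T}_\gamma(w_{m_j})\to\bar w$. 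Applying Fej\'er monotonicity with respect to this particular $\bar w\in X^\star$ then forces the whole sequence $\|z_m-\bar w\|$ to be non-increasing with a subsequence tending to $0$, so $y_{2m}=z_m\to\bar w\in X^\star$.

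I expect the main obstacle to be the bookkeeping of the second paragraph: correctly handling the over-relaxation coefficient $\lambda=1+\alpha>1$ through the Hilbert-space identity (which is why one must use the exact algebraic identity rather than convexity of $\|\cdot\|^2$) and checking that both residual terms emerge with the right sign, so that precisely the range $\alpha_{2m}\le\frac{1-\nu}{\nu}=\min\{1,1/(\gamma L)\}-\frac12$ is what makes the descent work. The final passage to the limit is the routine Opial-type argument for Fej\'er monotone iterations, requiring only the minor extra step of extracting a subsequence along which the inertial parameters converge.
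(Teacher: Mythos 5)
Your proof is correct. Note that the paper does not actually prove this theorem --- it imports it from \cite[Lemma~6]{iutzeler2016generic} --- but your argument is the natural one behind that result: identify the admissible inertia range with the over-relaxation margin $\tfrac{1-\nu}{\nu}=\min\{1,1/(\gamma L)\}-\tfrac12$ of the $\nu$-averaged operator of Lemma~\ref{lem:op}, apply the averagedness inequality twice per period together with the exact convex-combination identity (valid for $\lambda=1+\alpha>1$), and finish with the standard Fej\'er/Opial limit argument; all the algebra and sign checks in your derivation are right.
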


For instance, with the typical choice $\gamma = 1/L$, this result states that taking a fixed inertia $\alpha_k = \alpha \in [0, 0.5]$ leads to a monotonic convergence. This is an interesting property which is verified for the vanilla proximal gradient algorithm but not by other accelerated proximal gradient algorithms; see e.g.\;\cite{mainge2008convergence}. However, this result suffers from the fact that monotonicity cannot be guaranteed when inertia goes to $1$, as for the popular Nesterov sequence \eqref{eq:nesta}.

\subsection{Functional Monotonicity}
\label{sec:func}

As recalled in Section~\ref{sec:pre}, the proximal gradient algorithm with $\gamma \in ]0,2/L[$ generates function values $(F(x_k))$ that decrease monotonically to $F^\star$. Interestingly, a monotonic behavior can be retrieved when considering alternated inertia.

\begin{theorem}[Descent result]
\label{th:main_des}
Take $\gamma>0$, $\alpha_k \geq 0$. Then, for an iteration of \eqref{eq:altin} with $k>0$ even,  
\begin{align}
\nonumber F( y_{k+2} )  &\leq F(y_k)  - \frac{(2-\alpha_k-\gamma L)}{2\gamma} \left[  \left\| y_{k+2} - x_{k+1} \right\|^2  + \left\| y_{k+1} - x_k \right\|^2 \right] \\
\label{eq:a_desKL} &\leq F(y_k)  - \frac{(2-\alpha_k-\gamma L)\gamma}{2(1+\gamma L)^2}  \dist(0,\partial F(y_{k+2} ))^2 .
\end{align}
\end{theorem}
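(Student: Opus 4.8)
The strategy is to apply the functional descent inequality of Lemma~\ref{lem:1} twice along one double-step of~\eqref{eq:altin}, once for the plain step $y_{k+2} = \mathsf{T}_\gamma(x_{k+1})$ and once for the inertial step $y_{k+1} = \mathsf{T}_\gamma(x_k)$, and then to combine the two resulting estimates so that the inertial correction term can be absorbed.

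First I would write Lemma~\ref{lem:1} applied at the point $x_{k+1}$ with the free variable $y$ chosen to be $y_k$ (or rather $y_{k+2}$'s predecessor; the clean choice is $y = y_{k+1}$ for the telescoping, but to connect $F(y_{k+2})$ to $F(y_k)$ I actually want to use $y = y_k$ on the plain step and then control the extra $\tfrac{1}{2\gamma}\|x_{k+1}-y_k\|^2$ term). Concretely, with $\mathsf{T}_\gamma(x_{k+1}) = y_{k+2}$,
\begin{equation*}
F(y_{k+2}) + \frac{1-\gamma L}{2\gamma}\|y_{k+2}-x_{k+1}\|^2 + \frac{1}{2\gamma}\|y_{k+2}-y_{k+1}\|^2 \le F(y_{k+1}) + \frac{1}{2\gamma}\|x_{k+1}-y_{k+1}\|^2,
\end{equation*}
and since $x_{k+1} - y_{k+1} = \alpha_k(y_{k+1}-y_k)$ by~\eqref{eq:altin}, the last term is $\tfrac{\alpha_k^2}{2\gamma}\|y_{k+1}-y_k\|^2$. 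Next, apply Lemma~\ref{lem:1} at the point $x_k$ with $\mathsf{T}_\gamma(x_k) = y_{k+1}$ and free variable $y = y_k$:
\begin{equation*}
F(y_{k+1}) + \frac{1-\gamma L}{2\gamma}\|y_{k+1}-x_k\|^2 + \frac{1}{2\gamma}\|y_{k+1}-y_k\|^2 \le F(y_k) + \frac{1}{2\gamma}\|x_k-y_k\|^2,
\end{equation*}
and here $x_k = y_k$ since $k$ is even (no inertia on even steps), so the right-hand side is just $F(y_k)$. Adding the two displays, $F(y_{k+1})$ cancels, leaving $F(y_{k+2})$ on the left and $F(y_k)$ on the right, together with a collection of squared-norm terms; the only term with the "wrong" sign is $+\tfrac{\alpha_k^2}{2\gamma}\|y_{k+1}-y_k\|^2$ on the right, and we have a favorable $-\tfrac{1}{2\gamma}\|y_{k+1}-y_k\|^2$ and a $-\tfrac{1}{2\gamma}\|y_{k+2}-y_{k+1}\|^2$ on the left to play against it.

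The main obstacle — and the only real calculation — is the bookkeeping that turns the leftover $\tfrac{1}{2\gamma}(\alpha_k^2 - 1)\|y_{k+1}-y_k\|^2$ plus the remaining nonnegative terms into the claimed coefficient $-\tfrac{2-\alpha_k-\gamma L}{2\gamma}$ in front of $\|y_{k+2}-x_{k+1}\|^2 + \|y_{k+1}-x_k\|^2$. Here I would use two observations. On the inertial step, $y_{k+1}-x_k = y_{k+1}-y_k$, so the $-\tfrac{1-\gamma L}{2\gamma}\|y_{k+1}-x_k\|^2$ term is already of the desired form and combines with the $(\alpha_k^2-1)$ term; since $\alpha_k \in [0,1]$ we have $\alpha_k^2 \le \alpha_k$, giving $\tfrac{1}{2\gamma}(\alpha_k^2 - 1 + (1-\gamma L))\|y_{k+1}-x_k\|^2 \le -\tfrac{1}{2\gamma}(2-\alpha_k-\gamma L - 1 + \gamma L - \alpha_k + \alpha_k^2 )\cdots$; more cleanly, one checks $\alpha_k^2 - 1 - (1-\gamma L) = -(2-\gamma L-\alpha_k) - \alpha_k(1-\alpha_k) \le -(2-\gamma L - \alpha_k)$. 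For the plain step one needs to produce $-\tfrac{2-\alpha_k-\gamma L}{2\gamma}\|y_{k+2}-x_{k+1}\|^2$ out of $-\tfrac{1-\gamma L}{2\gamma}\|y_{k+2}-x_{k+1}\|^2 - \tfrac{1}{2\gamma}\|y_{k+2}-y_{k+1}\|^2$; discarding the nonnegative $\|y_{k+2}-y_{k+1}\|^2$ term and noting that $1-\gamma L \ge 2-\alpha_k-\gamma L$ would go the wrong way, so instead I keep the $\|y_{k+2}-y_{k+1}\|^2$ term and use a triangle-type split — writing $y_{k+2}-y_{k+1} = (y_{k+2}-x_{k+1}) + (x_{k+1}-y_{k+1}) = (y_{k+2}-x_{k+1}) + \alpha_k(y_{k+1}-y_k)$ and expanding $\|y_{k+2}-y_{k+1}\|^2$, then using Young's inequality to redistribute the cross term between the two squared quantities so that the coefficients match $(2-\alpha_k-\gamma L)$ on both. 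This redistribution is where the precise constant $2-\alpha_k-\gamma L$ comes from, and getting the Young parameter exactly right is the delicate point.

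Once the first inequality in~\eqref{eq:a_desKL} is established, the second follows immediately: the two squared norms on the right are exactly $\|x_{k+1}-\mathsf{T}_\gamma(x_{k+1})\|^2$ and $\|x_k-\mathsf{T}_\gamma(x_k)\|^2$, and by Lemma~\ref{lem:klopt} the first of these satisfies $\|x_{k+1}-y_{k+2}\|^2 \ge \bigl(\tfrac{\gamma}{1+\gamma L}\bigr)^2 \dist(0,\partial F(y_{k+2}))^2$; dropping the second (nonnegative) term and substituting this lower bound for the first yields the stated bound in terms of $\dist(0,\partial F(y_{k+2}))^2$. Throughout, the only place convexity or the range of $\alpha_k$ enters is in the inequality $\alpha_k(1-\alpha_k)\ge 0$ used to clean up the coefficient, so I would state the hypothesis as $\alpha_k \in [0,1]$ (consistent with "$\alpha_k \ge 0$" in the statement together with the implicit normalization; if $\alpha_k > 1$ one would simply not be able to drop that term and the descent could fail, which is worth a remark).
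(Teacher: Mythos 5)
Your route is essentially the paper's: two applications of Lemma~\ref{lem:1} (at $x_k$ with $y=y_k=x_k$, and at $x_{k+1}$ with $y=y_{k+1}$), summation, the decomposition $y_{k+2}-y_{k+1}=(y_{k+2}-x_{k+1})+\alpha_k(y_{k+1}-y_k)$, and Young's inequality on the cross term; the passage to $\dist(0,\partial F(y_{k+2}))$ via Lemma~\ref{lem:klopt} is also identical. The one thing to fix is your bookkeeping of the $\alpha_k^2$ term, which you effectively count twice. The right-hand side of the first display contributes $+\tfrac{\alpha_k^2}{2\gamma}\|y_{k+1}-y_k\|^2$, and expanding
$-\tfrac{1}{2\gamma}\|y_{k+2}-y_{k+1}\|^2=-\tfrac{1}{2\gamma}\|y_{k+2}-x_{k+1}\|^2-\tfrac{\alpha_k}{\gamma}\langle y_{k+2}-x_{k+1}, y_{k+1}-y_k\rangle-\tfrac{\alpha_k^2}{2\gamma}\|y_{k+1}-y_k\|^2$
cancels it \emph{exactly}; all that survives is the cross term, and Cauchy--Schwarz plus Young adds $\tfrac{\alpha_k}{2\gamma}$ to each of the two coefficients (which at that point are $-\tfrac{2-\gamma L}{2\gamma}$ on $\|y_{k+1}-x_k\|^2$ and $-\tfrac{1}{2\gamma}-\tfrac{1-\gamma L}{2\gamma}$ on $\|y_{k+2}-x_{k+1}\|^2$), yielding $-\tfrac{2-\alpha_k-\gamma L}{2\gamma}$ on both. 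In particular the inequality $\alpha_k^2\le\alpha_k$ is never needed, and the restriction $\alpha_k\in[0,1]$ you propose to add is spurious: the stated inequality holds for every $\alpha_k\ge 0$ (it simply ceases to be a descent when $2-\alpha_k-\gamma L<0$). Your alternative of absorbing $\alpha_k^2\|y_{k+1}-y_k\|^2$ into $-\tfrac{1}{2\gamma}\|y_{k+1}-y_k\|^2$ and then discarding $\|y_{k+2}-y_{k+1}\|^2$ both requires $\alpha_k\le1$ and, as you note yourself, leaves the coefficient of $\|y_{k+2}-x_{k+1}\|^2$ at the insufficient $-\tfrac{1-\gamma L}{2\gamma}$; so commit to the expansion-plus-Young version, which is the paper's computation.
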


\begin{proof}
Applying Lemma~\ref{lem:1} with $x = y = x_k = y_k$, we get
\begin{equation}
\label{eq:1}
F( y_{k+1} ) + \frac{(2 - \gamma L)}{2\gamma} \left\| y_{k+1} - x_k \right\|^2 \leq F(y_k)  
\end{equation}
and by applying Lemma~\ref{lem:1} with $x = x_{k+1}$ and $y = y_{k+1}$, we get {\small
\begin{equation}
\label{eq:2}
F( y_{k+2} ) + \frac{(1 - \gamma L)}{2\gamma} \left\| y_{k+2} - x_{k+1} \right\|^2 + \frac{1}{2\gamma} \left\| y_{k+2} - y_{k+1} \right\|^2 \leq F(y_{k+1})  + \frac{1}{2\gamma} \left\| x_{k+1} - y_{k+1} \right\|^2.
\end{equation}}
Summing Eqs.~\eqref{eq:1} and \eqref{eq:2}, we obtain
\begin{align*}
F( y_{k+2} ) &\leq  F(y_k)  +  \frac{1}{2\gamma} \left\| x_{k+1} - y_{k+1} \right\|^2 - \frac{1}{2\gamma} \left\| y_{k+2} - y_{k+1} \right\|^2   \\
&~~~~~ - \frac{(2-\gamma L)}{2\gamma} \left\| y_{k+1} - x_k \right\|^2 - \frac{(1 - \gamma L)}{2\gamma} \left\| y_{k+2} - x_{k+1} \right\|^2  \\
&=  F(y_k)  - \frac{1}{2\gamma} \left\| y_{k+2} - x_{k+1} \right\|^2   -  \frac{1}{\gamma} \langle y_{k+2} - x_{k+1}  ; x_{k+1}  - y_{k+1}  \rangle \\
&~~~~~ - \frac{(2-\gamma L)}{2\gamma} \left\| y_{k+1} - x_k \right\|^2 - \frac{(1 - \gamma L)}{2\gamma} \left\| y_{k+2} - x_{k+1} \right\|^2  \\
&=  F(y_k)  -  \frac{\alpha_k}{\gamma} \langle y_{k+2} - x_{k+1}  ; y_{k+1}  - y_{k}  \rangle\\
&~~~~~  - \frac{(2-\gamma L)}{2\gamma} \left\| y_{k+1} - x_k \right\|^2 - \frac{(2 - \gamma L)}{2\gamma} \left\| y_{k+2} - x_{k+1} \right\|^2  \\
%
%
&\leq  F(y_k)  - \frac{(2-\alpha_k-\gamma L)}{2\gamma} \left[  \left\| y_{k+2} - x_{k+1} \right\|^2  + \left\| y_{k+1} - x_k \right\|^2 \right]
\end{align*}
where the final inequality comes from Cauchy-Schwarz and Young's inequalities. Lemma~\ref{lem:klopt} then directly gives the second part of the result. \qed
\end{proof}

Instantiating this result with the standard step-size $\gamma = 1/L$, we have that the choice $\alpha_k = \alpha = 1- \varepsilon $ for any $\varepsilon\in]0,1]$ leads to a monotonic functional convergence. This is in contrast with Section\;\ref{sec:ite} where the condition for iterates monotonic convergence was $\alpha\leq 0.5$. Interestingly, if $(\alpha_k)$ is chosen as in \eqref{eq:nesta} or any sequence valued in $[0,1]$, the functional descent is preserved (although maybe not the convergence). Note also that, unlike MFISTA, MAPG, or MTwist, no functional evaluation is needed to guarantee this monotonicity. Thus, this monotonicity brought by alternating inertia has a practical interest; we will see in the next section that it also has a theoretical interest as it opens the door for a complexity analysis.

\subsection{Complexity Analysis}
\label{sec:KL}

The functional monotonicity of an algorithm can be combined with some geometric properties of the objective function in order to derive convergence rates. Two types of geometric profiles are often used: error bounds or Kurdyka-{\L}ojasiewicz gradient inequalities (see the extended survey in \cite{thesetrong} or the associated article \cite{bolte2015error}). For a nonsmooth\footnote{For a nonsmooth (possibly nonconvex) function $\FF\colon\RR^n\rightarrow\RR$, we denote by $\partial \FF(x)$ the limiting (Fr\'echet) subdifferential at $x$ \cite{RocWet98}. If $\FF$ is convex, this subdifferential coincides with the standard convex subdifferential.} function $\FF\colon\RR^n\rightarrow\RR$ achieving its minimum $\FF^\star$ (so that $\argmin \FF \neq \emptyset$),
these local properties write for $x \notin \argmin \FF$ as
\begin{itemize}
    \item \emph{error bounds} ~~~~~~~~~~~~~~~~~~~~~~~~~~ $\varphi(\FF(x)-\FF^\star)\geq \dist(x,\argmin \FF)$
    \item \emph{Kurdyka-{\L}ojasiewicz} (KL) ~~~~~~~~ $\varphi'(\FF(x)-\FF^\star) \dist(0,\partial \FF(x))\geq 1$ 
\end{itemize}
where $\varphi$ is a smooth increasing concave function, called \emph{desingularizing function} (see \cite{Bolte2007KL} or \cite{bolte2015error} for details). Typical desingularizing functions are of the form $\varphi(t) = C t^\theta /\theta$ for $\theta\in]0,1]$ and $C>0$. Interestingly, with this class of desingularizing functions, the two properties are equivalent for convex functions with the same desingularizing function \cite[Th.~5]{bolte2015error}. Also, in this case, we can reformulate the KL property in a simpler way: $\FF$ has the KL property (with $\theta\in]0,1]$ and $C>0$)  if 
\begin{align}\label{eq:klloc2}
\dist(0,\partial \FF(x)) \geq 
\frac{1}{C} ( \FF(x)-\FF^\star )^{1-\theta}   \qquad~~~ \text{for all $x \notin \argmin \FF$  }.
\end{align}
The KL property may look strong at a first sight but it turns out to be widely satisfied. In particular, any convex semi-algebraic function verifies the KL property \cite{Bolte2007KL}. In some special cases, we know explicitly the parameters $\theta$ and $C$ of the desingularizing function; see the examples in \cite[Sec.\,3]{bolte2015error}. For instance, we know that $\ell_1$-regularized least-squares functions
$$
\FF(x) = \frac{1}{2} \left\|Ax-b\right\|_2^2 +  \lambda_1 \|x\|_1
$$
have the KL property \eqref{eq:klloc2} on $\ell_1$-balls with $\varphi(s)= C\sqrt{s}$ with an explicit constant $C$ from $A$, $b$, $\lambda_1$ and the size of the ball \cite[Lem.\;10]{bolte2015error}.

The KL property is particularly well-suited for complexity analysis
of descent algorithms: see in particular \cite{attouch2009convergence,frankel2015splitting} for so-called subgradient descent algorithms,
or \cite{li2015accelerated} for algorithms showing a descent property of the form 
$$ \FF(x_{k+1}) \leq \FF(x_k) - a ~ \dist^2(0,\partial \FF(x_{k+1})).  $$

However, Theorem~\ref{th:main_des} does not offer such properties for typical $\gamma=1/L$ and Nesterov-like inertial sequences:
the decreasing term in \eqref{eq:a_desKL} may go to $0$ but not faster than $1/k$ which we denote\footnote{$a_k = \Omega(b_k) $ if $\exists a,K$ such that $\forall k\geq K$ we have $a_k\geq a.b_k$  } $a_k = \Omega(1/k)$. We thus provide an extension of existing results to cover our situation: the following theorem states functional convergence rates for general algorithms showing the weak descent property \eqref{eq:weak}, as the one of alternated inertia proximal gradient algorithm.

\begin{theorem}[General convergence with weak descent]
\label{th:ratekl2}
Let $\FF$ verify the KL property \eqref{eq:klloc2}. Suppose that there are a non-negative sequence $(a_k)$ and an $\mathbb{R}^n$-valued sequence $(x_k)$ verifying 
\begin{align}
    \label{eq:weak}
\FF(x_{k+1}) \leq \FF(x_k)  - a_k  \dist^2(0,\partial \FF(x_{k+1})) ~~~ \text{ and } ~~~ \sum_{k=1}^\infty a_k = +\infty 
\end{align}
then $\FF(x_k)$ converges to $\FF^\star$ and 
\begin{align*}
    \FF(x_{k+1}) - \FF^\star = \left\{ \begin{array}{ll}
       \mathcal{O}\left( \frac{1}{\left(\sum_{\ell=0}^k \min\{a_\ell,C_l\}\right)^{\frac{1}{1-\theta}}} \right)  & \text{ for } \theta\in]0,0.5[  \\[0.4cm]
      \mathcal{O}\left( \exp\left( -\frac{1}{2C^2} \sum_{\ell=0}^k \min\{a_\ell , C_e \} \right) \right)    &  \text{ for } \theta \in[0.5,1[ \\[0.4cm]
        0 ~~ \forall k\geq K := \inf\left\{ k :  \sum_{\ell=0}^k a_\ell > C^2 (\FF(x_0) - \FF^\star)  \right\}        &  \text{ for } \theta = 1 \\
    \end{array} \right .
\end{align*}
for positive constants $C_l, C_e$. Then, depending on the behavior of $(a_k)$ divided in three regimes:
\begin{itemize}
\item[(a)] $a_k \geq a >0$
\item[(b)] $a_k \geq 0$ with $a_k = \Omega\left( \frac{1}{k^d}\right)$, $d\in]0,1[$
\item[(c)] $a_k \geq 0$ with $a_k = \Omega\left( \frac{1}{k}\right)$
\end{itemize}
we get the functional convergence rates displayed in Table~\ref{tab:rates}.
\end{theorem}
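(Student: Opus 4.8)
The plan is to reduce the whole statement to the scalar sequence $r_k := \FF(x_k) - \FF^\star \ge 0$ and argue only with it. By \eqref{eq:weak} the sequence $(r_k)$ is non-increasing and bounded below, so $r_k \downarrow r_\infty \ge 0$; this already gives the convergence claim once I rule out $r_\infty > 0$. So suppose $r_\infty > 0$: then no $x_{k+1}$ belongs to $\argmin \FF$, and the KL inequality \eqref{eq:klloc2} yields $\dist^2(0,\partial\FF(x_{k+1})) \ge \tfrac{1}{C^2} r_{k+1}^{\,2(1-\theta)} \ge \tfrac{1}{C^2} r_\infty^{\,2(1-\theta)} > 0$; summing \eqref{eq:weak} then forces $\tfrac{1}{C^2} r_\infty^{\,2(1-\theta)} \sum_k a_k \le r_0 - r_\infty < \infty$, contradicting $\sum a_k = +\infty$. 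Hence $r_\infty = 0$. If $r_m = 0$ for some $m$, monotonicity gives $r_k = 0$ for all $k \ge m$ and every rate below holds trivially, so from now on I assume $r_k > 0$ for all $k$; injecting \eqref{eq:klloc2} into \eqref{eq:weak} then gives the master recursion
\[ r_{k+1} \;\le\; r_k - \frac{a_k}{C^2}\, r_{k+1}^{\,2(1-\theta)} \qquad (\star), \]
on which everything rests.

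I would then split according to the exponent $2(1-\theta)$. For $\theta = 1$, $(\star)$ reads $r_{k+1} \le r_k - a_k/C^2$; telescoping --- legitimate as long as the iterates stay out of $\argmin\FF$ --- gives $r_{k+1} \le r_0 - \tfrac{1}{C^2}\sum_{\ell=0}^k a_\ell$, which is negative, hence impossible, as soon as $\sum_{\ell=0}^k a_\ell > C^2 r_0$; so $r_{k+1} = 0$ for all $k \ge K$, with $K$ as in the statement. For $\theta \in [0.5,1)$ we have $2(1-\theta) \le 1$; since $r_k \to 0$ there is $k_0$ with $r_k \le 1$ for $k \ge k_0$, whence $r_{k+1}^{\,2(1-\theta)} \ge r_{k+1}$ and $(\star)$ gives $r_{k+1}\bigl(1 + \tfrac{a_k}{C^2}\bigr) \le r_k$. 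Iterating from $k_0$ and using $(1+t)^{-1} \le e^{-t/2}$ --- valid on a fixed interval $[0,\tau]$, which is exactly why the cap $C_e := \tau C^2$ appears, the inequality being applied to $t = \min\{a_k,C_e\}/C^2$ --- yields $r_{k+1} \le r_{k_0}\exp\!\bigl(-\tfrac{1}{2C^2}\sum_{\ell=k_0}^k \min\{a_\ell,C_e\}\bigr)$, and absorbing the finitely many terms $\ell < k_0$ into the constant gives the announced $\mathcal{O}\bigl(\exp(-\tfrac{1}{2C^2}\sum_{\ell=0}^k \min\{a_\ell,C_e\})\bigr)$.

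The delicate case --- the one I expect to be the main obstacle --- is $\theta \in ]0,0.5[$, where $2(1-\theta) > 1$ and the most naive scalar estimates do not directly produce the exponent $1/(1-\theta)$ of the statement. The device I would use is: fix $N$, and for each $k < N$ write $r_{k+1}^{\,2(1-\theta)} = r_{k+1}^{\,1-\theta}\cdot r_{k+1}^{\,1-\theta}$ and lower-bound one factor by $r_N^{\,1-\theta}$ (legitimate since $(r_k)$ is non-increasing), so that $(\star)$ becomes $\tfrac{r_k - r_{k+1}}{r_{k+1}^{\,1-\theta}} \ge \tfrac{a_k}{C^2}\, r_N^{\,1-\theta}$. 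Concavity of $s \mapsto s^\theta$ gives $r_k^\theta - r_{k+1}^\theta \le \theta\, r_{k+1}^{\,\theta-1}(r_k - r_{k+1}) = \theta\,\tfrac{r_k - r_{k+1}}{r_{k+1}^{\,1-\theta}}$, so $r_k^\theta - r_{k+1}^\theta \ge \tfrac{\theta a_k}{C^2}\, r_N^{\,1-\theta}$, a genuine telescoping increment. Summing over $k = 0,\dots,N-1$ and discarding $r_N^\theta \ge 0$ gives $r_0^\theta \ge \tfrac{\theta}{C^2}\, r_N^{\,1-\theta}\sum_{\ell=0}^{N-1} a_\ell$, i.e.
\[ r_N \;\le\; \Bigl( \frac{C^2 r_0^\theta}{\theta \sum_{\ell=0}^{N-1} a_\ell} \Bigr)^{\!\frac{1}{1-\theta}}, \]
which, using $a_\ell \ge \min\{a_\ell,C_l\}$, is the announced $\mathcal{O}\bigl((\sum_{\ell=0}^{N-1}\min\{a_\ell,C_l\})^{-1/(1-\theta)}\bigr)$ with $N = k+1$ (for the bounded $(a_k)$ coming from Theorem~\ref{th:main_des} the cap $C_l$ is immaterial).

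Finally, Table~\ref{tab:rates} follows by substituting the growth of $S_k := \sum_{\ell=0}^k \min\{a_\ell,\cdot\}$ into these three bounds: $S_k = \Theta(k)$ in regime (a), $S_k = \Omega(k^{1-d})$ in regime (b), and $S_k = \Omega(\log k)$ in regime (c). This turns the polynomial bound into $\mathcal{O}(k^{-1/(1-\theta)})$, $\mathcal{O}(k^{-(1-d)/(1-\theta)})$, $\mathcal{O}((\log k)^{-1/(1-\theta)})$ respectively; the exponential bound into linear convergence, into $\mathcal{O}(\exp(-c\,k^{1-d}))$, and into a merely sublinear $\mathcal{O}(k^{-c})$ respectively; and the $\theta = 1$ case into finite termination in all three regimes (here $\sum a_k = +\infty$ is used once more, to guarantee the threshold defining $K$ is reached). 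The only real idea in the argument is the split-and-telescope step of the previous paragraph; everything else is the KL inequality together with elementary manipulations of a non-increasing scalar sequence.
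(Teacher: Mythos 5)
Your handling of the convergence claim and of the cases $\theta=1$ and $\theta\in[0.5,1[$ is correct and essentially the same as the paper's. The genuine gap is in the case $\theta\in]0,0.5[$, precisely at the step you single out as the only real idea. From concavity of $s\mapsto s^\theta$ you correctly obtain the \emph{upper} bound $r_k^\theta-r_{k+1}^\theta\le\theta\,(r_k-r_{k+1})/r_{k+1}^{1-\theta}$, and from $(\star)$ you have the \emph{lower} bound $(r_k-r_{k+1})/r_{k+1}^{1-\theta}\ge \tfrac{a_k}{C^2}r_N^{1-\theta}$; but from ``$A\le B$'' and ``$B\ge c$'' you cannot conclude ``$A\ge c$'', so the telescoping increment $r_k^\theta-r_{k+1}^\theta\ge\tfrac{\theta a_k}{C^2}r_N^{1-\theta}$ does not follow. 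It is in fact false: take $N=1$, $r_0=1$, $r_1=\varepsilon$ and $a_0=C^2(1-\varepsilon)\varepsilon^{2\theta-2}$, which satisfies $(\star)$ with equality; your final bound would then give $r_1\le c_\theta\,\varepsilon^{2}$, contradicting $r_1=\varepsilon$ for $\varepsilon$ small. The obstruction is exactly the one the paper's two-subcase analysis is built for: when $r_{k+1}$ drops far below $r_k$ in a single step, the quantity $(r_k-r_{k+1})\,r_{k+1}^{2\theta-2}$ is not controlled by the telescoped difference of any power of $r$. The paper works with $\phi(t)=\tfrac{C}{1-2\theta}t^{2\theta-1}$ and distinguishes $r_{k+1}^{2\theta-2}\le 2r_k^{2\theta-2}$ (where the integral comparison loses only a factor $2$) from the opposite case (where $\phi(r_{k+1})-\phi(r_k)$ is bounded below by a fixed constant $d_b$); the cap $\min\{a_\ell,C_l\}$ in the statement exists precisely to absorb that second case, so it is not ``immaterial'' as you assert --- without it the bound is false, as the example above shows.

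A second, related discrepancy: even if your telescoping were valid, it would produce the exponent $\tfrac1{1-\theta}$, whereas Table~\ref{tab:rates} --- which is also part of the statement --- requires $\tfrac1{1-2\theta}$ (note $1+\tfrac{2\theta}{1-2\theta}=\tfrac1{1-2\theta}$ in regime (a)), so your final substitution step does not reproduce the table. (The displayed formula inside the theorem indeed shows $\tfrac1{1-\theta}$, but this is inconsistent with the table and with the paper's own derivation, which telescopes $t^{2\theta-1}$ rather than $t^{\theta}$ and obtains $\tfrac1{1-2\theta}$.) To repair your argument you would need to telescope $r_k^{-(1-2\theta)}$ (equivalently the paper's $\phi$) and add the large-drop subcase that justifies the cap $C_l$.
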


\begin{table}[!ht]
\begin{tabular}{c|c|c|c|}
& $\theta \in]0,0.5[$ & $\theta \in[0.5,1[$ & $\theta = 1$ \\
\hline
(a) & $\mathcal{O}\left(\frac{1}{k^{1+\frac{2\theta}{1-2\theta}}}\right)$ & $\mathcal{O}\left( \left[\frac{C^2}{ C^2 + a  }\right]^k \right)$ & finite \\
\hline
(b) & $\mathcal{O}\left( \frac{1}{k^{1 + \frac{2\theta-d}{1-2\theta}}} \right) $ & $\mathcal{O}\left( \exp\left(-\frac{C'}{2C^2}k^{1-d}\right) \right)$ &  finite \\
\hline
(c) & $\mathcal{O}\left( \frac{1}{\log(k)^{\frac{1}{1-2\theta}}} \right)$ & $\mathcal{O}\left( \frac{1}{k^{\frac{C"}{2C^2}}} \right)$ & finite  \\
\hline
\end{tabular}
\caption{Functional convergence rates with $C' =  \lim\inf_k s_k/k^{1-d} $, $C" = \lim\inf_k s_k/\log(k)$ and $s_k = \sum_{\ell\leq k} \min(a_\ell,2C^2) $. }
\label{tab:rates}
\end{table}

\begin{proof} 
The monotonicity of $(\FF(x_k))$ implies that $\FF(x_k) \to \bar{\FF}$. Since $\sum_{k=1}^\infty a_k = +\infty $, we have moreover that a subsequence of $(\dist(0,\partial \FF(x_{k})))$ vanishes. By using \eqref{eq:klloc2}, this yields that the corresponding subsequence of $(\FF(x_k))$ converges to $\FF^\star$, hence $\bar{\FF} = \FF^\star$.

Define $r_k := \FF(x_k)-\FF^\star$, then $r_k\to 0$ monotonically. Thus, one can again use the KL property \eqref{eq:klloc2} and the descent on $\FF(x_k)$ to get
\begin{align}
\label{eq:basis2}
r_{k+1} \leq r_k - \frac{a_k}{C^2}  r_{k+1}^{2-2\theta} ~~~ \Leftrightarrow ~~~  \frac{a_k}{C^2}  \leq (r_k - r_{k+1}) r_{k+1}^{2\theta-2}.
\end{align}
which will be our core equation for rates derivations, which we separate in three cases depending on $\theta$.

\smallskip
\noindent\underline{Case $\theta= 1$.} Eq.~\eqref{eq:basis2} becomes $r_{k+1} \leq r_k - \frac{a_k}{C^2}  $, so by summing this inequality we get
$$ r_{k+1} \leq  r_0 - \frac{1}{C^2} \sum_{\ell=0}^k a_\ell$$
and as $(a_k)$ is a non-negative sequence verifying $\sum_{k=1}^\infty a_k = +\infty$, there is $K<\infty$, such that $\forall k\geq K$, $\frac{1}{C^2} \sum_{\ell=0}^k a_k > r_0$ leading to $ r_{k+1} =   \FF(x_{k+1})-\FF^\star < 0$ which contradicts $\FF^\star$ being the minimum of $\FF$. Thus, we must have $\FF(x_{k+1}) = \FF^\star$ for  all $k\geq K$, i.e. finite convergence.

\medskip
\noindent\underline{Case $\theta\in[0.5,1[$.} Here $0<2-2\theta\leq1$. Since $r_k \to 0$ monotonically, we have $r_k^{2-2\theta} \geq r_k $ for all $k\geq K$,
$$ 
r_{k+1} \leq r_k -  \frac{a_k}{C^2}   r_{k+1} \Leftrightarrow r_{k+1} \leq \frac{1}{1+  \frac{a_k}{C^2}  } r_{k}  
$$
which leads to different convergence modes depending of $(a_k)$. If it is bounded away from zero, linear convergence arises (case \emph{(a)}). Else, one gets that the above inequality also holds with $a_k$ replaced by $a'_k = \min(a_k, 2C^2)$, and we have $\log ( 1/(1+  \frac{a'_k}{C^2}  ) ) \leq -  \frac{a'_k}{2C^2}$. In addition, if $a_k = \Omega(b_k)$ with $b_k\to 0 $, then $a'_k=\Omega(b_k)$ too so
$$ \log( r_{k+1})  \leq \sum_{\ell=0}^k \log\left( \frac{1}{1+  \frac{a'_\ell}{C^2}  } \right)  + \log(r_{0}) \leq  -   \frac{1}{2C^2} \sum_{\ell=0}^k  a'_\ell +\log(r_{0}) \leq \left\{ \begin{array}{ll} - \frac{C'}{2C^2} k^{1-d} +\log(r_{0})  & \text{ case \emph{(b)} } \\  - \frac{C"}{2C^2} \log(k) +\log(r_{0})  & \text{ case \emph{(c)} }  \end{array}  \right. $$
leading, for $C_{b},C_{c}$ two positive constants, to 
$$ r_{k+1} \leq \left\{ \begin{array}{ll} C_{b}  \exp( -\frac{C'}{2C^2} k^{1-d})  & \text{ case \emph{(b)} }  \\ C_{c} \frac{1}{k^{C"/(2C^2)}}    & \text{ case \emph{(c)} }  \end{array}  \right. $$

\smallskip
\noindent\underline{Case $\theta\in]0,0.5[$.} Here $-2<2\theta-2<-1$ so as $0\leq r_{k+1} \leq r_k$, we have  $0\leq r_{k}^{2\theta-2} \leq r_{k+1}^{2\theta-2}$. 

Define $\phi(t) = \frac{C}{1-2\theta} t^{2\theta-1}$ with the same $C,\theta$ as in $\varphi$. Let us turn our attention to $\phi(r_{k+1}) - \phi(r_k)$ that we want to lower-bound by a constant. We proceed in two subcases:

\noindent If $r_{k+1}^{2\theta-2} \leq 2 r_{k}^{2\theta-2}$. Then we have
\begin{align*}
\phi(r_{k+1}) -  \phi(r_k)  &= -  \int^{r_k}_{r_{k+1}} \phi'(t) \mathrm{d}t  =  C  \int^{r_k}_{r_{k+1}} t^{2\theta-2} \mathrm{d}t \geq  C (r_{k} - r_{k+1}) r_{k}^{2\theta-2} \\
&\geq \frac{C}{2} (r_{k} - r_{k+1}) r_{k+1}^{2\theta-2}  = \frac{a_k}{2C} := d_{a_k}
\end{align*}

\noindent If $r_{k+1}^{2\theta-2} \geq 2 r_{k}^{2\theta-2}$. Then we have  $ r_{k+1}^{2\theta-1} \geq 2^{\frac{2\theta-1}{2\theta-2}} r_{k}^{2\theta-1}$
\begin{align*}
\phi(r_{k+1}) -  \phi(r_k)  &= \frac{C}{1-2\theta} (r_{k+1}^{2\theta-1}-r_k^{2\theta-1} ) \geq  \frac{C}{1-2\theta} \left( 2^{\frac{2\theta-1}{2\theta-2}} -1 \right)r_{k}^{2\theta-1} \geq  \frac{C}{1-2\theta} \left( 2^{\frac{2\theta-1}{2\theta-2}} -1 \right) r_{0}^{2\theta-1} := d_b
\end{align*}

Thus, we have $\phi(r_{k+1}) - \phi(r_k) \geq a'_k/(2C)$ with $a'_k = \min \{a_k,2C d_b\}$, thus 
$$ \phi(r_k) = \phi(r_{0}) + \sum_{\ell=0}^{k-1} \phi(r_{\ell+1})  -   \phi(r_{\ell})  \geq  \frac{1}{2C} \sum_{\ell=0}^{k-1} a'_\ell . $$
We have to split into two cases: \\
\noindent case \emph{(a)} $ \phi(r_k) \geq  k \min \{ a/2C ; d_b \}  $ hence the result holds by inverting $\phi$.\\
\noindent cases \emph{(b)} and \emph{(c)}, as before $a'_k = \Omega(b_k)$ and 
$$ 
\phi(r_{k+1}) \geq  \frac{1}{2C} \sum_{\ell=0}^k  a'_\ell \geq   \left\{ \begin{array}{ll}  C_b k^{1-d}    & \text{ case \emph{(b)} } \\[2ex] C_c \log(k)  & \text{ case \emph{(c)} }  \end{array}  \right.  
\qquad 
r_{k+1} \leq  \left\{ \begin{array}{ll}  \left( \frac{C}{(1-2\theta) C_b k^{1-d} } \right)^{\frac{1}{1-2\theta}}    & \text{ case \emph{(b)} } \\ \left( \frac{C}{(1-2\theta) C_c\log(k)} \right)^{\frac{1}{1-2\theta}}  & \text{ case \emph{(c)} }  \end{array}  \right.
$$
which leads to the claimed result.\qed
\end{proof}

The general convergence rates for weakly decreasing sequences thus depend on the geometry of the objective function (controlled by $C$ and $\theta$) and the speed of decrease of $\FF(x_k)$ is controlled by~$(a_k)$. To our knowledge, this is the first result accepting vanishing $(a_k)$. 
Note that the proof of our result uses techniques from \cite[Th.~2]{attouch2009convergence}; however, it deals with the convergence analysis of functional values and not the iterates sequence $(x_k)$, in contrast with the analysis of subgradient descent algorithms in \cite{attouch2009convergence,bolte2015error}. The latter analysis can cover the case of sequence with a stronger descent assumption on the algorithm (typically $\FF(x_{k+1}) \leq \FF(x_k) - a \|x_{k+1}-x_k\|^2$). Inertial versions of the proximal gradient algorithm, including the one studied here, do not satisfy this assumption.

\begin{rmk}[Global vs local KL assumptions]
In the above theorem and all next corollaries using the KL property \eqref{eq:klloc2}, we assume it holds for all $x\in \mathrm{dom}\,\FF$ for sake of simplicity (since the main topic of the paper is alternated inertia). In fact, the result holds under milder conditions; for instance, the exact same proof can be carried through without the global KL assumption in the following cases: 
\begin{itemize}
\item If \eqref{eq:klloc2} holds on $\{x: \FF(x)\leq \FF(x_0) \} $ only; then, as $\FF(x_k)\leq\FF(x_0)$, the property can be applied for the whole sequence $(x_k)$.
\item If $\FF$ is coercive, $(a_k)$ is non-increasing, and  \eqref{eq:klloc2} holds on $\{x: \FF(x)\leq \FF^\star + \eta \} $ for some $\eta>0$; then $(x_k)$ is bounded, 
and thus there is converging subsequence verifying $(\mathrm{dist}(0,\partial \FF(x_{k_n})))\to 0$ thus $\FF(x_k)\to\FF^\star$ monotonically so after some $K$, $\FF(x_k)\leq \FF^\star + \eta$  and \eqref{eq:basis2} holds.
\item If \eqref{eq:klloc2} holds on $\{x:\mathrm{dist}(0,\partial \FF(x))\leq \varepsilon\}\cup\{x: \FF(x)\leq \FF^\star + \eta \} $ for some $\varepsilon,\eta>0$; then the property can be used after some $K$ to retrieve convergence to $\FF^\star$ and  \eqref{eq:basis2} with the same arguments.
\end{itemize}
In addition, Theorem~\ref{th:ratekl2} also holds when \eqref{eq:klloc2} is verified for some local minimizer $\hat{\FF}$ instead of $\FF^\star$.
\end{rmk}

Combining the general convergence result with the derived descent of the proximal gradient algorithm with alternated inertia, we get the following convergence rates for our algorithm of interest.

\begin{coro}[Convergence and Rates of alternated inertial proximal gradient]
\label{co:alterkl}
Let $F$ verify the KL property \eqref{eq:klloc2}.
Then, for the alternated inertial proximal gradient algorithm \eqref{eq:altin}, $F(y_{2k})$ converges to $F^\star$ with the functional rates presented in Table~\ref{tab:rates} depending on the inertia parameters $(\alpha_k)$ taken in $[0,1]$, divided in three regimes:
\begin{itemize}
\item[(a)] when $\gamma<1/L$ and any inertia choice. Or when $\gamma=1/L$ and limited inertia $\alpha_k\leq \alpha<1$.
\item[(b)] when $\gamma=1/L$  and  inertia parameters $(\alpha_k)$ converging to $1$ at rate $1/k^d$ ($d\in]0,1[$).
\item[(c)] when $\gamma=1/L$ and inertia parameters $(\alpha_k)$ converging to $1$ at rate $1/k$.
\end{itemize}
\end{coro}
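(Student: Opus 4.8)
The plan is to combine the descent result from Theorem~\ref{th:main_des} with the abstract rate machinery of Theorem~\ref{th:ratekl2}, so the core task is to verify that the pair $(F(y_{2k}))$ and a suitably defined coefficient sequence $(a_k)$ satisfies the weak descent hypothesis~\eqref{eq:weak}. First I would re-index the algorithm so that only the even iterates appear: setting $z_m := y_{2m}$, Theorem~\ref{th:main_des} applied with $k=2m$ gives
\begin{equation*}
F(z_{m+1}) \leq F(z_m) - a_m \,\dist^2(0,\partial F(z_{m+1})), \qquad a_m := \frac{(2-\alpha_{2m}-\gamma L)\gamma}{2(1+\gamma L)^2}.
\end{equation*}
So the whole statement reduces to checking that this $(a_m)$ is non-negative and has divergent sum, and then tracking which of the three regimes~(a)--(c) of Theorem~\ref{th:ratekl2} it falls into, as dictated by the behavior of $(\alpha_{2m})$.

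Next I would do the elementary bookkeeping on $a_m$. Non-negativity is equivalent to $\alpha_{2m} \leq 2-\gamma L$; since $\alpha_k\in[0,1]$ by hypothesis and $\gamma\le 1/L$, we have $2-\gamma L\ge 1\ge\alpha_{2m}$, so $a_m\ge 0$ always. For the three regimes: in case~(a), either $\gamma<1/L$, so $2-\alpha_{2m}-\gamma L \ge 1-\gamma L >0$ is bounded below by a positive constant uniformly in $m$, giving $a_m\ge a>0$; or $\gamma=1/L$ with $\alpha_k\le\alpha<1$, so $2-\alpha_{2m}-\gamma L = 1-\alpha_{2m}\ge 1-\alpha>0$, again $a_m\ge a>0$. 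In case~(b), $\gamma=1/L$ and $1-\alpha_k = \Omega(1/k^d)$, hence $a_m = \frac{\gamma}{2(1+\gamma L)^2}(1-\alpha_{2m}) = \Omega(1/(2m)^d) = \Omega(1/m^d)$. In case~(c), identically $a_m = \Omega(1/m)$. In all three cases $\sum_m a_m = +\infty$ (a positive constant sum, a $\sum 1/m^d$ sum with $d<1$, or a harmonic sum), so the hypothesis~\eqref{eq:weak} of Theorem~\ref{th:ratekl2} holds.

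Finally I would invoke Theorem~\ref{th:ratekl2}: it gives $F(z_m)=F(y_{2m})\to F^\star$ and the functional rates from Table~\ref{tab:rates}, with regimes (a)/(b)/(c) matching exactly the cases we have just identified. One small point to address cleanly is the application of the KL property, which in~\eqref{eq:klloc2} requires $F$ to achieve its minimum with $\argmin F\neq\emptyset$ — this is guaranteed by Assumption~\ref{hyp:gen}(iii), and the limiting subdifferential there coincides with the convex subdifferential, so Lemma~\ref{lem:klopt} (used inside Theorem~\ref{th:main_des}) is consistent with the $\dist(0,\partial F(\cdot))$ appearing in~\eqref{eq:klloc2}. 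I expect the main (though modest) obstacle to be precisely this indexing and constant-chasing: one must be careful that the ``$k$'' of Theorem~\ref{th:ratekl2} is the halved index $m$, that the $\Omega(1/k^d)$ rate of $(\alpha_k)$ transfers to an $\Omega(1/m^d)$ rate of $(a_m)$ (the constant factor $2^d$ and the constant $\gamma/(2(1+\gamma L)^2)$ are harmless), and that the boundary inertia value $\alpha=1$ with $\gamma=1/L$ is exactly the degenerate case excluded from regime~(a) but recovered asymptotically in~(b)--(c). No genuinely new estimate is needed beyond what Theorems~\ref{th:main_des} and~\ref{th:ratekl2} already provide.
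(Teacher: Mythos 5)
Your proposal is correct and follows exactly the paper's own route: invoke Eq.~\eqref{eq:a_desKL} from Theorem~\ref{th:main_des} to get the weak descent~\eqref{eq:weak} with $a_k = \frac{(2-\alpha_k-\gamma L)\gamma}{2(1+\gamma L)^2}$, then apply Theorem~\ref{th:ratekl2} with $(x_k)\equiv(y_{2k})$ and $\Phi\equiv F$. Your extra bookkeeping on the re-indexing and on the non-negativity and divergence of $(a_m)$ in each regime is accurate and in fact slightly more careful than the paper's one-line argument.
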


\begin{proof}
The result follows from the descent result of Theorem~\ref{th:main_des}, precisely Eq.~\eqref{eq:a_desKL} which gives 
$$  F( y_{k+2} )  \leq F(y_k)  - \frac{(2-\alpha_k-\gamma L)\gamma}{2(1+\gamma L)^2}  \dist(0,\partial F(y_{k+2} ))^2$$ 
combined with Theorem~\ref{th:ratekl2} applied with $(x_k) \equiv (y_{2k})$, $\Phi \equiv F$, and $a_k = \frac{(2-\alpha_k-\gamma L)\gamma}{2(1+\gamma L)^2}  $. \qed 
\end{proof}




The convergence rates are established for various inertial parameters $(\alpha_k)$ and strongly depends on the geometric properties: $\theta=1$ corresponding to finite convergence, $\theta\in[0.5,1[$  corresponding to linear or sublinear convergence, and  $\theta\in]0,0.5[$  corresponding to sublinear convergence. In particular, for case \emph{(b)} corresponding to the inertia type used in \cite{aujol2015stability}: in the least favorable case $\theta\in]0,0.5[$, convergence is at least $\mathcal{O}(1/k)$ when $d\leq2\theta$ and can be  $\mathcal{O}(1/k^2)$ for $\theta\in]0.25,0.5[$ with $d\leq 4\theta-1$. The case \emph{(c)} covers the popular Nesterov sequence \eqref{eq:nesta}.

Notice also that the present analysis does not extend easily to other monotonic accelerated proximal gradient algorithms. Indeed, MTwist \cite{bioucas2007new} and MFISTA \cite{beck2009fastb} only rely on function evaluation to get non-strict functional decrease and thus do not enjoy convergence rates from the above results. Concerning Monotone APG \cite{li2015accelerated}, the additional \emph{unaccelerated} step at each iteration makes it fit in the strongest case \emph{(a)} (the proof of which is inspired from \cite{li2015accelerated}) but at the cost of roughly doubling the computation expenses.

\section{Extensions}
\label{sec:ext}

\subsection{Resilience to Strong Convexity}
\label{sec:strong}




Let us consider the case where $F$ is in addition $\mu$-strongly convex (typically when $f$ is $\mu$-strongly convex and $g$ is simply convex) but this strong convexity is either undetected, local, or $\mu$ is unknown. Then, the vanilla proximal gradient algorithm is known to have exponential convergence (see \cite{karimi2016linear} for a proof based on error bounds) but accelerated versions such as FISTA only have polynomial convergence ($1/k^2$ in general). Moreover, these convergence rates clearly show in practice; see e.g. \cite{malitsky2016first} or the numerical illustrations of Section\;\ref{sec:num}. 
For the alternated inertial proximal gradient algorithm, we establish a better-than-polynomial convergence rate, which moreover translates in practice as shown in the numerical experiments.

\begin{prop}[Strongly convex case]
 Assume that $F$ is in addition $\mu$-strongly convex and take $\gamma$ and $(\alpha_k)$ as in Corollary~\ref{co:alterkl}. Then, the alternated inertial proximal gradient algorithm \eqref{eq:altin} verifies:
 \begin{itemize}
  \item \emph{better-than-polynomial rate:} provided that $\gamma$ and $(\alpha_k)$ are chosen as in \emph{(a)} or \emph{(b)}, $F(y_{2k})$ converges to $F^\star$ at rate $\mathcal{O}\left( \exp\left(-\nu k^{1-d}\right) \right)$ for $\nu>0$ and $d=0$ (case \emph{(a)}) or $d\in]0,1[$ (case \emph{(b)});
 \item \emph{iterates convergence:} $(y_{2k})$ converges to the unique minimizer of $F$ at the same rate.
 \end{itemize} 
\end{prop}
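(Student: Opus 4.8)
The plan is to derive the result from Corollary~\ref{co:alterkl} by pinning down the exact geometry that $\mu$-strong convexity confers on $F$: namely that $F$ satisfies the KL inequality \eqref{eq:klloc2} with the sharp exponent $\theta=1/2$ and an explicit constant depending on $\mu$, together with the matching error bound. Once this is in place, the better-than-polynomial rate is just the $\theta\in[0.5,1[$ column of Table~\ref{tab:rates}, rewritten in the announced form $\mathcal{O}(\exp(-\nu k^{1-d}))$, and the iterates convergence follows from the error bound.

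First I would establish the KL-type inequality. Fixing $x$ and any $p\in\partial F(x)$, $\mu$-strong convexity gives $F(y)\geq F(x)+\ip{p}{y-x}+\tfrac{\mu}{2}\|y-x\|^2$ for all $y$; minimizing the right-hand side over $y$ (the minimum being attained at $y=x-p/\mu$) yields $F^\star\geq F(x)-\tfrac{1}{2\mu}\|p\|^2$, and taking the infimum over $p\in\partial F(x)$ gives $\dist(0,\partial F(x))^2\geq 2\mu\,(F(x)-F^\star)$ for all $x$. This is precisely \eqref{eq:klloc2} with $\theta=1/2$ and $C=1/\sqrt{2\mu}$, and it holds \emph{globally}, without $\mu$ entering the algorithm anywhere, which is the essence of the claimed resilience. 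The same chain of inequalities (or the standard quadratic-growth bound) also yields the error bound $\|x-x^\star\|\leq\sqrt{\tfrac{2}{\mu}\,(F(x)-F^\star)}$, where $x^\star$ denotes the minimizer of $F$, unique by strong convexity.

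Next I would invoke Corollary~\ref{co:alterkl} with $\theta=1/2$ and $C=1/\sqrt{2\mu}$. Under the hypotheses of case~(a) (resp. case~(b)), which are exactly those of the proposition, the corollary gives $F(y_{2k})\to F^\star$ at the rate in the $\theta\in[0.5,1[$ entry of Table~\ref{tab:rates}. In case~(a) this is $\mathcal{O}\big((C^2/(C^2+a))^k\big)=\mathcal{O}(\exp(-\nu k))$ with $\nu=\log(1+2\mu a)>0$, which is the announced bound with $d=0$; in case~(b) it is $\mathcal{O}\big(\exp(-\tfrac{C'}{2C^2}k^{1-d})\big)=\mathcal{O}\big(\exp(-\mu C'\,k^{1-d})\big)$, the announced bound with $\nu=\mu C'>0$ and $d\in]0,1[$. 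Feeding this functional rate into the error bound of the previous step gives $\|y_{2k}-x^\star\|\leq\sqrt{\tfrac{2}{\mu}(F(y_{2k})-F^\star)}=\mathcal{O}(\exp(-\tfrac{\nu}{2}k^{1-d}))$, i.e. iterates convergence at the same rate up to halving the constant in the exponent.

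I do not expect a genuine obstacle: the argument is essentially plumbing between results already established. The one point that deserves care is verifying that the weak-descent hypothesis \eqref{eq:weak} underlying Corollary~\ref{co:alterkl} is indeed met in regime~(b), where $\alpha_k\to1$. With $\gamma=1/L$ one has $2-\alpha_k-\gamma L=1-\alpha_k$, so the coefficient $a_k$ appearing in \eqref{eq:a_desKL} is a fixed positive multiple of $1-\alpha_k=\Omega(1/k^d)$ with $d<1$, whence $\sum_k a_k=+\infty$ and Theorem~\ref{th:main_des} applies verbatim. The remaining work is the bookkeeping of the constants $C$, $a$, $C'$ and the elementary identity $q^k=\exp(-k\log(1/q))$.
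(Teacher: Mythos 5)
Your proposal is correct and follows the same overall route as the paper: obtain the KL property \eqref{eq:klloc2} with exponent $\theta=1/2$ from $\mu$-strong convexity, feed it into Corollary~\ref{co:alterkl} to read off the $\theta\in[0.5,1[$ entries of Table~\ref{tab:rates}, and then transfer the functional rate to the iterates via the quadratic growth bound $\|y_{2k}-x^\star\|^2\leq \tfrac{2}{\mu}(F(y_{2k})-F^\star)$. The one place you diverge is how the KL inequality is established: the paper first writes the error bound $F(x)-F^\star\geq \tfrac{\mu}{2}\dist^2(x,X^\star)$ and then invokes the equivalence between error bounds and KL for convex functions (Th.~5 of the cited reference), which yields the desingularizing function $\tilde{\varphi}(t)=2\sqrt{2/\mu}\,t^{0.5}$; you instead prove $\dist(0,\partial F(x))^2\geq 2\mu\,(F(x)-F^\star)$ directly by minimizing the strong-convexity lower bound over $y$ (the standard Polyak--{\L}ojasiewicz argument). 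Your route is self-contained and gives the sharper constant $C=1/\sqrt{2\mu}$ rather than the factor-of-two-degraded constant coming out of the equivalence theorem, which propagates into slightly better explicit exponents $\nu$; the paper's route is shorter on the page because it leans on an already-cited black box. Your extra check that $\sum_k a_k=+\infty$ in regime (b) is also a welcome piece of diligence that the paper handles only with the phrase ``provided that $(2-\alpha_k-\gamma L)$ does not vanish as quickly as $1/k$.''
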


\begin{proof}
As $F$ is $\mu$-strongly convex, we have $F(x)-F^\star \geq \mu/2 \|x-x^\star\|^2 =  \mu/2 \dist^2(x,X^\star)$ which is a (global) error bound with perspective function $\varphi(t)=\sqrt{2/\mu}~t^{0.5}$. Thanks to the equivalence between error bounds and KL property for convex functions \cite[Th.~5]{bolte2015error}, we have for the alternated inertial proximal gradient the rates of Corollary~\ref{co:alterkl} with the perspective function $\tilde{\varphi}(t) = 2\sqrt{2/\mu}~ t^{0.5}$ i.e. $\theta=0.5$ (as the KL property actually holds for all $x$). So, provided that $(2-\alpha_k-\gamma L)$  does not vanish as quickly as $1/k$, we recover the rates of the top two lines of the column corresponding to $\theta=0.5$ in Theorem~\ref{th:ratekl2}. Finally, as $  \|y_{2k}-x^\star\|^2 \leq 2/\mu (F(y_{2k}) -F^\star) $, functional convergence (and rate) leads to iterates convergence (and rate). \qed
\end{proof}

\subsection{Case of Non-Convex $g$}\label{sec:noncvx}

In general, Kurdyka-{\L}ojasiewicz inequality is particularly suited to study non-convex case as a large number of non-convex functions verify KL properties. However, one has to find a functional decrease in algorithms to establish convergence properties. This decrease may be harder to obtain in the non-convex case. 
While one can get a functional decrease for the vanilla proximal gradient even when $f$ and $g$ are non-convex\footnote{In the case of the proximal gradient, the proof of Lemma~\ref{lem:1} recalled in the appendix requires (i) convexity of $f$ in order to take $x\neq y$ (see Eq.~\eqref{eq:cvxlem}); (ii) convexity of $g$ to get the term in $\|\mathsf{T}_\gamma(x) - y\|$ by strong convexity of the proximal surrogate (see Eq.~\eqref{eq:surr}).}, the presence of (alternated) inertia requires to be able to take $x\neq y$ in Lemma~\ref{lem:1} which prevents from getting rid of the convexity of $f$. 

For $f$ non-convex and $g$ convex, a solution, used for instance in the algorithm iPiano \cite{ochs2014ipiano}, is to slightly modify the inertial proximal gradient to recover a descent property on some Lyapunov function. Then, convergence and rates can be obtained by an adaptation of the existing results of \cite{Bolte2007KL}. 

Here, we rather study the case where the convexity of $g$ is dropped. The definition of the proximal operator directly extends \cite[Def.~1.22]{RocWet98} and our analysis indeed follows at the expense of a smaller range of stepsizes and inertia parameters, as formalized in the following proposition.

\begin{prop}[Non-convex case]
 Let Assumption~\ref{hyp:gen} hold without the convexity of $g$. Let $F$ verify the KL property \eqref{eq:klloc2}. Then, the alternated inertial proximal gradient algorithm \eqref{eq:altin} has the functional rates of Theorem~\ref{th:ratekl2} depending on $(\alpha_k)$ corresponding to the three regimes:
\begin{itemize}
\item[(a)] when $\gamma<1/(2L)$ and any $\alpha_k\leq1/2$. Or when $\gamma=1/(2L)$ and limited inertia $\alpha_k\leq \alpha<1/2$.
\item[(b)] when $\gamma=1/(2L)$ and inertia parameters $(\alpha_k)$ converging to $1/2$ at rate $1/k^d$ ($d\in]0,1[$).
\item[(c)] when $\gamma=1/(2L)$ and inertia parameters $(\alpha_k)$ converging to $1/2$ at rate $1/k$.
\end{itemize}
\end{prop}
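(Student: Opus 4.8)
# Proof Proposal for the Non-Convex Case Proposition

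\textbf{Overall approach.} The plan is to mimic the proof of Theorem~\ref{th:main_des} but replace the convexity-based Lemma~\ref{lem:1} with a non-convex counterpart that is valid when $g$ is only proper and lower semi-continuous (but $f$ is still convex and $L$-smooth). The loss of convexity of $g$ means we can no longer use the strong convexity of the proximal surrogate to produce the full cross term $\frac{1}{2\gamma}\|\mathsf{T}_\gamma(x)-y\|^2$; instead we only get a descent inequality in which the effective smoothness/step-size constraint tightens. Concretely, I expect the analogue of Lemma~\ref{lem:1} to hold with $\gamma L$ replaced by $2\gamma L$ (hence the step-size restriction $\gamma < 1/(2L)$ and the inertia threshold $1/2$ instead of $1$), and with the factor in front of $\|\mathsf{T}_\gamma(x)-y\|^2$ possibly halved as well. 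Once that modified descent lemma is in place, the rest of the argument is a verbatim repeat of the proof of Theorem~\ref{th:main_des} followed by an application of Theorem~\ref{th:ratekl2}.

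\textbf{Key steps in order.} First I would establish the non-convex descent lemma: for $\gamma>0$, using the descent lemma for the convex $L$-smooth $f$ (i.e.\ $f(w)\leq f(x)+\ip{\nabla f(x)}{w-x}+\frac{L}{2}\|w-x\|^2$) together with the fact that $w=\mathsf{T}_\gamma(x)=\prox_{\gamma g}(x-\gamma\nabla f(x))$ minimizes $g(\cdot)+\frac{1}{2\gamma}\|\cdot-(x-\gamma\nabla f(x))\|^2$, I would derive an inequality of the form
\begin{equation*}
F(\mathsf{T}_\gamma(x))+\frac{(1-2\gamma L)}{2\gamma}\|\mathsf{T}_\gamma(x)-x\|^2+\frac{1}{2\gamma}\|\mathsf{T}_\gamma(x)-y\|^2\leq F(y)+\frac{1}{2\gamma}\|x-y\|^2
\end{equation*}
for all $x,y$, where convexity of $f$ (not of $g$) is what permits taking $x\neq y$; the term $\|\mathsf{T}_\gamma(x)-y\|^2$ still arises, now from the exact (non-strongly-convex) optimality comparison $g(\mathsf{T}_\gamma x)+\frac{1}{2\gamma}\|\mathsf{T}_\gamma x-(x-\gamma\nabla f(x))\|^2\leq g(y)+\frac{1}{2\gamma}\|y-(x-\gamma\nabla f(x))\|^2$, which holds verbatim without convexity of $g$. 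I would also record the non-convex analogue of Lemma~\ref{lem:klopt}, $\dist(0,\partial F(\mathsf{T}_\gamma x))\leq \frac{2L\gamma+1}{\gamma}\|x-\mathsf{T}_\gamma x\|$ (or a similarly rescaled bound), using the limiting subdifferential calculus $0\in\partial g(\mathsf{T}_\gamma x)+\frac{1}{\gamma}(\mathsf{T}_\gamma x-x+\gamma\nabla f(x))$ and $\partial F=\nabla f+\partial g$ (the sum rule holds since $f$ is smooth).

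\textbf{Assembling the descent and closing the argument.} Second, I would apply this modified lemma twice along one double-iteration of \eqref{eq:altin} — once with $x=y=x_k=y_k$ and once with $x=x_{k+1}$, $y=y_{k+1}$ — sum the two inequalities, expand $\|x_{k+1}-y_{k+1}\|^2=\alpha_k^2\|y_{k+1}-y_k\|^2$ and the cross term $\ip{y_{k+2}-x_{k+1}}{x_{k+1}-y_{k+1}}=\alpha_k\ip{y_{k+2}-x_{k+1}}{y_{k+1}-y_k}$, and finish with Cauchy--Schwarz/Young exactly as in the proof of Theorem~\ref{th:main_des}. This yields
\begin{equation*}
F(y_{k+2})\leq F(y_k)-\frac{(2-\alpha_k-2\gamma L)}{2\gamma}\Big[\|y_{k+2}-x_{k+1}\|^2+\|y_{k+1}-x_k\|^2\Big],
\end{equation*}
and chaining with the subdifferential bound gives the weak-descent inequality \eqref{eq:weak} with $a_k=\frac{(2-\alpha_k-2\gamma L)\gamma}{2(2\gamma L+1)^2}$. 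Third, I would check the three regimes: for $\gamma<1/(2L)$ and $\alpha_k\leq 1/2$ (or $\gamma=1/(2L)$, $\alpha_k\leq\alpha<1/2$) one has $a_k\geq a>0$ (case (a)); for $\gamma=1/(2L)$ and $\alpha_k\to 1/2$ at rate $1/k^d$ one has $2-\alpha_k-2\gamma L=1/2-\alpha_k+O(\ldots)$... more carefully, $2-\alpha_k-1=1-\alpha_k\to 1/2$, so in fact $a_k$ stays bounded away from $0$ — here I must be careful that the threshold in the regimes refers to $1-\alpha_k$ behaving like $1/k^d$, i.e.\ $\alpha_k\to 1/2$ should actually read so that $2-\alpha_k-2\gamma L\to 0$; with $\gamma=1/(2L)$ this is $1-\alpha_k\to 0$, meaning $\alpha_k\to 1$ — so I would re-examine whether the intended threshold is $\alpha_k\to 1$ rather than $1/2$, or whether the step-size is $\gamma$ slightly above $1/(2L)$; in any case $a_k=\Omega(1/k^d)$ or $\Omega(1/k)$ in cases (b),(c). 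Finally, apply Theorem~\ref{th:ratekl2} with $(x_k)\equiv(y_{2k})$, $\FF\equiv F$, and this $a_k$ to read off Table~\ref{tab:rates}.

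\textbf{Main obstacle.} The delicate point is the non-convex descent lemma — specifically, verifying that the cross term $\frac{1}{2\gamma}\|\mathsf{T}_\gamma(x)-y\|^2$ genuinely survives without convexity of $g$ (it does, because it comes from the \emph{exact} minimality of the prox, not from strong convexity of $g$, while strong convexity of the quadratic surrogate $\frac{1}{2\gamma}\|\cdot\|^2$ is automatic), and pinning down the exact constant degradation ($\gamma L\rightsquigarrow 2\gamma L$) that forces the halved thresholds. A secondary subtlety is the subdifferential sum rule and the $\dist(0,\partial F)$ bound in the limiting-subdifferential setting, but since $f$ is smooth this is standard \cite{RocWet98}. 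I would also double-check the precise statement of the inertia regimes against the constant $(2-\alpha_k-2\gamma L)$ to ensure the thresholds in (a)–(c) are stated consistently.
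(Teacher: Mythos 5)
Your plan follows the paper's route in outline (replace Lemma~\ref{lem:1} by a non-convex analogue, rerun the proof of Theorem~\ref{th:main_des}, feed the resulting $a_k$ into Theorem~\ref{th:ratekl2}), but the non-convex descent lemma you propose is false, and this is precisely the point the proposition turns on. You claim the term $\frac{1}{2\gamma}\|\mathsf{T}_\gamma(x)-y\|^2$ ``genuinely survives'' because it comes from exact minimality of the prox plus the automatic strong convexity of the quadratic. It does not: the inequality $s_x(\mathsf{T}_\gamma(x))+\frac{1}{2\gamma}\|\mathsf{T}_\gamma(x)-y\|^2\leq s_x(y)$ needs the \emph{whole} surrogate $s_x$ to be $\tfrac{1}{\gamma}$-strongly convex, which requires $g$ convex; once $g$ can curve downwards, only the plain minimality $s_x(\mathsf{T}_\gamma(x))\leq s_x(y)$ remains. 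A one-dimensional counterexample: take $g=\iota_{\{0,1\}}$ and $z=x-\gamma\nabla f(x)=0.4$, so $\mathsf{T}_\gamma(x)=0$; with $y=1$ your retained-term inequality reads (after multiplying by $2\gamma$ and cancelling constants) $0.16+1\leq 0.36$, which is false. The correct statement is the paper's Lemma~\ref{lem:1noncvx},
\[
F(\mathsf{T}_\gamma(x))+\frac{1-\gamma L}{2\gamma}\left\|\mathsf{T}_\gamma(x)-x\right\|^2\leq F(y)+\frac{1}{2\gamma}\left\|x-y\right\|^2 ,
\]
with the cross term simply gone. Your secondary modification $\gamma L\rightsquigarrow 2\gamma L$ has no source either: the descent lemma for the $L$-smooth $f$ and the bound of Lemma~\ref{lem:klopt} are untouched by the non-convexity of $g$.

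The correct accounting is that losing $\frac{1}{2\gamma}\|\mathsf{T}_\gamma(x)-y\|^2$ in both applications of the lemma along a double step degrades the descent factor of Theorem~\ref{th:main_des} from $(2-\alpha_k-\gamma L)$ to $(1-\alpha_k-\gamma L)$, and this --- not a doubled Lipschitz constant --- is what produces the halved thresholds $\gamma\leq 1/(2L)$ and $\alpha_k\leq 1/2$. Your own closing remark is the symptom of the error: you find that your factor $(2-\alpha_k-2\gamma L)=1-\alpha_k$ at $\gamma=1/(2L)$ only vanishes when $\alpha_k\to 1$, and you suspect a typo in the proposition. With the correct factor, $(1-\alpha_k-\gamma L)=\tfrac12-\alpha_k$ at $\gamma=1/(2L)$, and the regimes (a)--(c) with $\alpha_k\to 1/2$ match Theorem~\ref{th:ratekl2} exactly as stated. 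The remainder of your argument (two applications of the lemma, Young's inequality, then Theorem~\ref{th:ratekl2} with the resulting $a_k$) would go through once the lemma is fixed.
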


\begin{proof}
We use Lemma~\ref{lem:1noncvx} which is the non-convex version of Lemma~\ref{lem:1}. The descent term in Theorem~\ref{th:main_des} is now factored with $(1-\alpha_k-\gamma L)$ instead of $(2-\alpha_k-\gamma L)$. Then, all the following results (and notably the rates) hold with this modification since the KL property is independent of convexity. \qed
\end{proof}

\subsection{Proximal Gradient Algorithm with Alternated Extrapolation}
\label{sec:altext}

In this subsection, we extend the alternated inertia to a more general alternated ``extrapolation'', the iterations of which take the form
\begin{align}
\label{eq:altex}
\left\{ \begin{array}{ll} y_{k+1} = \mathsf{T}_\gamma(x_k)  & ~~~ y_{k+2} = \mathsf{T}_\gamma(x_{k+1}) \\ x_{k+1} = \mathbf{extrapolation}\left( \{y_{\ell}\}_{\ell\leq k+1} \right) &  ~~~x_{k+2} = y_{k+2} \end{array} \right. 
\end{align}
where $\mathbf{extrapolation}\left( \{y_{\ell}\}_{\ell\leq k+1} \right)$ is a linear combination of past iterates (see for instance the recent \cite{liang2016multi}). We show here that the following extrapolation step, surprisingly close to \emph{heavy balls},
\begin{align}
\label{eq:alt1k2}
x_{k+1} = y_{k+1} -   \frac{1}{t_{k/2+1}}( y_{k+1} - y_k ) +  \frac{t_{k/2}-1}{t_{k/2+1}}( y_{k} - y_{k-1} ) \qquad \text{with $(t_k)$ as in \eqref{eq:nesta}} 
\end{align}
guarantees a worst case $\mathcal{O}(1/k^2)$ rate. In other words, alternating extrapolation allows attaining the same rates as standard inertia (e.g. FISTA) where alternating inertia cannot (see Corollary~\ref{co:alterkl}). Furthermore, if the KL property is additionally assumed then this rate may be improved to a small-o convergence instead of big-O for $\theta=0.5$ (with different arguments than in \cite{attouch2016rate}) and faster convergence as $\theta$ increases.

\begin{prop}[alternated extrapolation]
Take $\gamma\in]0,1/L]$. Then, the algorithm of \eqref{eq:altex} with the extrapolation defined in \eqref{eq:alt1k2} verifies for $k$ odd
\begin{align*}
     F(y_k) - F^\star \leq \frac{\|x_0 - x^\star\|^2}{2\gamma t_{k/2}^2} = \mathcal{O}\left(\frac{1}{k^2}\right) ~~ \text{ and }~~    & \| \mathsf{T}_\gamma(y_{k}) - y_{k}\|^2 = o\left(\frac{1}{t_{k/2}^2}\right) =  o\left(\frac{1}{k^2}\right)  .
\end{align*}
Moreover, if $F$ verifies the KL property \eqref{eq:klloc2}; then, we have that for $k$ odd
$$ F(y_{k}) - F^\star = o\left( \frac{1}{k^{\frac{1}{1-\theta}}} \right) . $$
\end{prop}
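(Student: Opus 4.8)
The plan is to establish the $\mathcal{O}(1/k^2)$ functional rate first, using a FISTA-style analysis adapted to the alternated-extrapolation setting, and then to leverage the descent structure to upgrade it to a small-$o$ and, under the KL property, to the stated $o(1/k^{1/(1-\theta)})$ rate. First I would look only at the odd iterates: writing $z_m := y_{2m-1}$ (so that $z_m = \mathsf{T}_\gamma(x_{2m-2})$ and $x_{2m-1}$ is obtained from $z_m$ and $z_{m-1}$ by the two-term extrapolation \eqref{eq:alt1k2}), the inner (even) steps $x_{2k+2} = y_{2k+2} = \mathsf{T}_\gamma(x_{2k+1})$ are plain proximal gradient steps, while the extrapolation before the next odd step rewrites as $x_{2m-1} = z_m - \frac{1}{t_{m}}(z_m - z_{m-1}) + \frac{t_{m-1}-1}{t_{m}}(z_{m-1} - z_{m-2})$ once one matches indices carefully. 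The key observation is that this is exactly the FISTA recursion if one identifies the ``prox-gradient output'' at stage $m$ with $z_m$: the intermediate unaccelerated step between two odd indices does not hurt because Lemma~\ref{lem:1} with $x=y$ gives $F(\mathsf{T}_\gamma(x_{2m-1})) \le F(x_{2m-1})$-type control, i.e. it can only further decrease the function value before the next odd iterate is formed. So I would set up the standard FISTA energy functional $\mathcal{E}_m := t_m^2(F(z_{m+1}) - F^\star) + \frac{1}{2\gamma}\|u_{m+1}\|^2$ where $u_{m+1} := t_m z_{m+1} - (t_m-1) z_m - z^\star$ (adjusted to account for the extra unaccelerated step), verify $\mathcal{E}_{m+1} \le \mathcal{E}_m$ using Lemma~\ref{lem:1} applied twice per outer cycle together with the relation $t_{m+1}^2 - t_{m+1} = t_m^2$ from \eqref{eq:nesta}, and conclude $F(z_{m+1}) - F^\star \le \|x_0 - x^\star\|^2/(2\gamma t_m^2) = \mathcal{O}(1/m^2)$. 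Translating back, $F(y_k) - F^\star = \mathcal{O}(1/(k/2)^2) = \mathcal{O}(1/k^2)$ for $k$ odd.

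Next, for the $o(1/t_{k/2}^2)$ bound on $\|\mathsf{T}_\gamma(y_k) - y_k\|^2$: summing the descent inequalities of Lemma~\ref{lem:1} (which is where the prox-gradient residual $\|\mathsf{T}_\gamma(x) - x\|^2$ appears with a positive coefficient, here $\gamma \le 1/L$ guarantees $(1-\gamma L) \ge 0$) across all iterations of the form $x_{2k+2} = \mathsf{T}_\gamma(x_{2k+1})$, and using that $F(y_k) - F^\star$ is summable after weighting — more precisely that $\sum_m (F(z_m) - F^\star)/? $ converges once we exploit monotonicity of $\mathcal{E}_m$ more carefully (the refined FISTA argument of the Chambolle--Dossal / Attouch--Peypouquet type) — one gets $\sum_m t_m^2 \|\mathsf{T}_\gamma(y_{2m-1}) - y_{2m-1}\|^2 < \infty$, hence $t_m^2 \|\mathsf{T}_\gamma(y_{2m-1}) - y_{2m-1}\|^2 \to 0$, which is the claim. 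Then for the final KL statement: since the residual $\|\mathsf{T}_\gamma(y_k) - y_k\|$ controls $\dist(0,\partial F(\mathsf{T}_\gamma(y_k)))$ via Lemma~\ref{lem:klopt}, and since $\mathsf{T}_\gamma(y_k)$ has function value within the same $\mathcal{O}(1/k^2)$ of $F^\star$ (by the descent of Lemma~\ref{lem:1}), the KL inequality \eqref{eq:klloc2} gives $(F(\mathsf{T}_\gamma(y_k)) - F^\star)^{1-\theta} \le C\,\dist(0,\partial F(\mathsf{T}_\gamma(y_k))) \le C'\|\mathsf{T}_\gamma(y_k) - y_k\| = o(1/t_{k/2})$; raising to the power $1/(1-\theta)$ yields $F(\mathsf{T}_\gamma(y_k)) - F^\star = o(1/t_{k/2}^{1/(1-\theta)}) = o(1/k^{1/(1-\theta)})$, and by monotonicity the same holds for $F(y_{k+2})$, which up to re-indexing is the stated bound for odd indices.

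The main obstacle is making the reduction to FISTA airtight: the extrapolation \eqref{eq:alt1k2} is not literally the FISTA extrapolation, because it involves $y_k$ and $y_{k-1}$ as well as $y_{k+1}$ (it is a ``two-lag heavy-ball''-looking object), and because there is an interleaved unaccelerated prox-gradient step on the even indices. I expect that the right bookkeeping is to show that after one even step the configuration $(y_k, y_{k-1})$ plays exactly the role that $(\text{prox output}_m, \text{prox output}_{m-1})$ plays in FISTA, so that \eqref{eq:alt1k2} collapses to the genuine FISTA recursion in the $z_m$ variables; verifying this algebraic identity, and checking that the extra unaccelerated step only improves the energy functional rather than breaking its monotonicity, is the delicate part. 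The KL refinement at the end is comparatively routine once the $o(1/t_{k/2}^2)$ residual bound is in hand, since it is the standard ``rate of the residual plus error bound gives rate of the gap'' argument, and the small-$o$ (rather than big-$O$) for $\theta = 0.5$ is then automatic from the $o$ in the residual estimate.
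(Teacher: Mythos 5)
Your overall architecture (a FISTA-type energy that telescopes, plus the standard KL endgame via Lemma~\ref{lem:klopt} and \eqref{eq:klloc2}) is the right one, and your treatment of the final KL statement matches the paper's. But the central step of your plan --- collapsing the alternated extrapolation to the genuine FISTA recursion in the odd-iterate variables $z_m = y_{2m-1}$ --- does not hold, and it is not a bookkeeping issue. In \eqref{eq:alt1k2} the coefficient of the \emph{even} iterate $y_k$ is $\frac{1}{t_{k/2+1}} + \frac{t_{k/2}-1}{t_{k/2+1}} = \frac{t_{k/2}}{t_{k/2+1}} \neq 0$, so $x_{k+1}$ genuinely depends on all three of $y_{k+1}$, $y_k$, $y_{k-1}$ and cannot be rewritten as $z_{m+1} + \beta(z_{m+1}-z_m)$; your rewriting of $x_{2m-1}$ in terms of $z_m,z_{m-1},z_{m-2}$ also silently replaces the even iterate $y_{2m-2}$ by the odd one $z_{m-1}=y_{2m-3}$. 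Consequently the energy built from odd iterates only, $\|t_m z_{m+1}-(t_m-1)z_m-z^\star\|^2$, is not the quantity that telescopes. The correct anchor mixes parities: \eqref{eq:alt1k2} is precisely the solution of
\[
t_{k/2+1}x_{k+1} - (t_{k/2+1}-1)y_{k+1} \;=\; t_{k/2}\,y_k - (t_{k/2}-1)\,y_{k-1},
\]
so the telescoping vector is $u_k = t_{k/2}y_k - (t_{k/2}-1)y_{k-1} - y^\star$ with $y_k$ even-indexed (the extrapolated prox output) and $y_{k-1}$ odd-indexed. One then applies the Beck--Teboulle descent lemma with extrapolation to the step $y_{k+2}=\mathsf{T}_\gamma(x_{k+1})$, the \emph{quantitative} descent $F(y_{k+1}) \leq F(y_k) - \frac{1}{2\gamma}\|\mathsf{T}_\gamma(y_k)-y_k\|^2$ (Lemma~\ref{lem:1} with $x=y$) to the interleaved unaccelerated step, and sums over even $k$.

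A second gap is your route to the summability of the weighted residuals: you invoke a Chambolle--Dossal/Attouch--Peypouquet-type refinement with an unresolved weight, which requires $a>2$ and is not needed. In the correct argument the residual bound is free: the unaccelerated-step descent injects the term $-\frac{t_{k/2}^2}{2\gamma}\|\mathsf{T}_\gamma(y_k)-y_k\|^2$ directly into the per-cycle inequality, so telescoping yields
\[
t_{k/2}^2\,\bigl(F(y_k)-F^\star\bigr) + \frac{1}{2\gamma}\sum_{\ell=0}^{k/2-1} t_\ell^2 \|\mathsf{T}_\gamma(y_{2\ell})-y_{2\ell}\|^2 \;\leq\; \frac{1}{2\gamma}\|y_0-y^\star\|^2,
\]
which delivers both the $\mathcal{O}(1/k^2)$ functional rate and the $o(1/t_{k/2}^2)$ residual estimate at once. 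Note also that using only the qualitative fact that the unaccelerated step ``can only further decrease the function value,'' as your plan suggests at one point, would lose exactly this residual term and hence the whole second half of the statement.
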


\begin{proof}
From the descent lemma with extrapolation \cite{beck2009fastb} 
with $x=x_{k}$ for $k$ even, and defining $F_k = F(y_k)-F^\star$, we get\\
\resizebox{0.99\textwidth}{!}{ $  t^2_{k/2+1} F_{k+2} - t_{k/2}^2 F_{k+1}  \leq  - \frac{1}{2\gamma}  \left[  \left\| t_{k/2+1} y_{k+2} -  (t_{{k/2}+1}-1)y_{k+1} -y^\star  \right\|^2 - \left\| t_{{k/2}+1} x_{k+1} -  (t_{{k/2}+1}-1)y_{k+1} -y^\star  \right\|^2   \right] .$ }\\
As $y_{k+1} =  \mathsf{T}_\gamma(x_k) =  \mathsf{T}_\gamma(y_k)$, we have $F(y_{k+1})\leq F(y_k) - \frac{1}{2\gamma}\| \mathsf{T}_\gamma(y_k) - y_k\|^2$ from Lemma~\ref{lem:1}, thus {\small
\begin{align*} 
t^2_{{k/2}+1} F_{k+2} - t_{k/2}^2 F_{k}  &\leq  - \frac{1}{2\gamma}  \left[  \left\| t_{{k/2}+1} y_{k+2} -  (t_{{k/2}+1}-1)y_{{k}+1} -y^\star  \right\|^2 - \left\| t_{{k/2}+1} x_{k+1} -  (t_{{k/2}+1}-1)y_{k+1} -y^\star  \right\|^2   \right] \\
& ~~~~  - \frac{t_{k/2}^2 }{2\gamma}\| \mathsf{T}_\gamma(y_k) - y_k\|^2
\end{align*}}
and in order to have a right hand side of the form  $ \frac{1}{2\gamma}  \left[   \left\| u_{k+2}  \right\|^2 - \left\|  u_{k}  \right\|^2 \right] $, one can choose the extrapolation, that is $x_{k+1}$ so that 
$$\left\| t_{{k/2}+1} x_{k+1} -  (t_{{k/2}+1}-1)y_{k+1} -y^\star  \right\|^2 =  \left\| t_{{k/2}} y_{k} -  (t_{{k/2}}-1)y_{k-1} -y^\star  \right\|^2$$
which naturally leads to taking
\begin{align*}
&  t_{{k/2}+1} x_{k+1} -  (t_{{k/2}+1}-1)y_{k+1} =  t_{{k/2}} y_{k} -  (t_{{k/2}}-1)y_{k-1}  \\
\Leftrightarrow ~~~~~~ &   x_{k+1} = y_{k+1} -  \frac{1}{t_{{k/2}+1}}( y_{k+1} - y_k ) +   \frac{t_{k/2}-1}{t_{{k/2}+1}}( y_{k} - y_{k-1} ) .
\end{align*}
This choice of extrapolation leads by summing the above inequality for $k$ even to
$$ t_{k/2}^2 F_{k}  +  \frac{1}{2\gamma} \sum_{\ell=0}^{k/2-1} t_{\ell}^2 \| \mathsf{T}_\gamma(y_{2\ell}) - y_{2\ell}\|^2 \leq  \frac{1}{2\gamma} \|y_0 - y^\star\|^2 $$
thus, noticing that all terms above are non-negative:\\
--  $F_{k} \leq \frac{1}{2\gamma t_{k/2}^2}  \|y_0 - y^\star\|^2 $.\\
--  $\sum_{\ell=0}^{\infty} t_{\ell}^2 \| \mathsf{T}_\gamma(y_{2\ell}) - y_{2\ell}\|^2 < +\infty$ thus $  \| \mathsf{T}_\gamma(y_{2\ell}) - y_{2\ell}\|^2 = o(1/t_{\ell}^2)$.\\
Interestingly, using  Lemma~\ref{lem:klopt}, we get that for $k$ even, $\text{dist}^2(0,\partial F(y_{k+1}) )  \leq (L+1/\gamma)^2\|\mathsf{T}_\gamma(y_k) - y_k\|^2  = o\left(\frac{1}{k^2}\right)$.  If we also assume that $F$ verifies \eqref{eq:klloc2} for some $C>0$, $\theta\in]0,1[$; then, we have that for $k$ odd $ F(y_{k}) - F^\star = o\left( \frac{1}{k^{\frac{1}{1-\theta}}} \right) $.
\qed
\end{proof}

\section{Numerical Experiments}
\label{sec:num}

We compare the vanilla proximal gradient algorithm with
\begin{itemize}
\item \emph{proposed alternated inertial} version with $\alpha_k = t_{k+1}/(t_k-1)$, $t_k = ((k+a)/a)^d$, and $d=0.8$;
\item \emph{proposed alternated extrapolation} of Section~\ref{sec:altext};
\item \emph{inertial} versions: FISTA \cite{beck2009fast} and monotonic counterparts MFISTA \cite{beck2009fastb} and MAPG \cite{li2015accelerated} as well as lasso-specific MTwist \cite{bioucas2007new}.
\end{itemize}
All algorithms were tuned as advised in their respective sources, the initial point was drawn randomly and common to all methods. We will compare the speed of convergence of algorithms with respect to their number of iterations. One iteration corresponds to one proximal gradient evaluation; more precisely, the additional functional value computations for MFISTA, MTwist, and MAPG were not taken into account, and an iteration of MAPG is counted double as two proximal gradient steps are computed.

In Section~\ref{sec:num_log}, we consider the composite problem of $\ell_1$-regularized logisitic regression. We show that alternated inertia provides an efficient acceleration in the sense that it uniformly accelerates the proximal gradient even when large stepsizes are taken. Then, in Section~\ref{sec:num_lasso}, we address the lasso problem which illustrates the resilience of alternated inertia to strong convexity. Finally, in Section~\ref{sec:num_ncvx}, we revisit the two previous problems but replace the $\ell_1$-norm with the non-convex $\ell_{0.5}$-``\emph{norm}'', this allows us to illustrate the monotonicity and performance of alternated inertia even in a non-convex setting.

\subsection{$\ell_1$-Regularized Logistic Regression}
\label{sec:num_log}

We first consider the problem of $\ell_1$-regularized logistic regression on the popular \texttt{ionosphere} dataset\footnote{\url{https://archive.ics.uci.edu/ml/datasets/ionosphere}} containing $m=351$ binary classified examples $(a_i,y_i)\in\mathbb{R}^n\times\{-1,+1\}$ of feature size $n=35$. The problem writes as
$$ \min_{x\in\mathbb{R}^n } \underbrace{ \frac{1}{m} \sum_{i=1}^m \log\left(1+\exp(-y_i \langle a_i ;  x \rangle )\right)}_{f(x) } +  \underbrace{ \lambda_1 \|x\|_1 }_{g(x)}$$
and we fix the regularization parameter $\lambda_1$ to $0.1$.

In Figure~\ref{fig:log}, we plot the functional error for all compared algorithms with four choices for stepsize $\gamma$: (a) $\gamma =1/L_u$ where $L_u $ is the usual upper bound for the Lipschitz constant of the gradient of $f$ which is known to be overly pessimistic in most practical cases; and (b,c,d) $\gamma = \gamma_{\max}/\nu$ for $\nu=\{8,3,1.5\}$ with $\gamma_{\max}$ being the greatest stepsize admissible for the proximal gradient algorithm before divergence. This second set of stepsize enables to exhibit the behavior of the compared algorithm with practical and performing stepsizes. We notice that the alternated inertial version exhibits a steady monotonic behavior always outperforming the vanilla proximal gradient. Also, while for smaller stepsizes non-monotonic accelerated versions outperform monotonic ones, the trend shifts for the greater ones. Ultimately, for the greatest stepsize vanilla proximal gradient and alternated inertial counterpart are the most performing. Finally, one can see that the alternated extrapolation performs a bit worse than FISTA with the same characteristic oscillation with a frequency twice as low.

\begin{figure*}[ht!]
\begin{tikzpicture}[scale=1]
\begin{groupplot}[group style={group name=plot1, group size= 2 by 2},width=\textwidth]

\nextgroupplot[ 
 width=0.45\columnwidth, 
 height=0.3\columnwidth, 
 xmin=0, 
 xmax=450, 
 xmajorgrids, 
 ymode=log,
 ymin=5e-16, 
 ymax=2, 
 yminorticks=true, 
 ymajorgrids, 
 yminorgrids, 
 ylabel={Functional error }
 ]

 \addplot [ 
 color=gray, 
 solid, 
 line width=2.0pt, 
 mark size=2.5pt, 
 mark=none
 ] 
file { data/log4_PG.dat };
 \label{plots1:PG}

 \addplot [ 
 color=red, 
 dashed, 
 line width=1.0pt, 
 mark size=1.5pt, 
 mark=square*,
 mark options={solid,fill=white,draw=red}, 
 mark repeat={70}, 
 ]  
file { data/log4_MAPG.dat };
 \label{plots1:MAPG}

 \addplot [ 
 color=red, 
 dashed, 
 line width=1.0pt, 
 mark size=1.9pt, 
 mark=triangle*,
 mark options={solid,fill=white,draw=red}, 
 mark repeat={70}, 
 ] 
file { data/log4_MFISTA.dat };
 \label{plots1:MFISTA}

 \addplot [ 
 color=red, 
 solid, 
 line width=1.0pt, 
 mark size=1.8pt, 
 mark=none,
 mark options={solid,fill=white,draw=red}, 
 mark repeat={70}, 
 ] 
file { data/log4_FISTA.dat };
 \label{plots1:FISTA}

 \addplot [ 
 color=blue, 
 solid, 
 line width=1.5pt, 
 mark size=1.8pt, 
 mark=none,
 mark options={solid,fill=white,draw=red}, 
 mark repeat={70}, 
 ] 
file { data/log4_AI.dat };
 \label{plots1:AI}

 \addplot [ 
 color=blue, 
 solid, 
 line width=0.8pt, 
 mark size=1.8pt, 
 mark=none,
 mark options={solid,fill=white,draw=red}, 
 mark repeat={70}, 
 ]  
file { data/log4_AI2.dat };
 \label{plots1:AI2}

\nextgroupplot[ 
 width=0.45\columnwidth, 
 height=0.3\columnwidth, 
 xmin=0, 
 xmax=450, 
 xmajorgrids, 
 ymode=log,
 ymin=5e-16, 
 ymax=2, 
 yminorticks=true, 
 ymajorgrids, 
 yminorgrids, 
 ]

 \addplot [ 
 color=gray, 
 solid, 
 line width=2.0pt, 
 mark size=2.5pt, 
 mark=none
 ] 
file { data/log3_PG.dat };

 \addplot [ 
 color=red, 
 dashed, 
 line width=1.0pt, 
 mark size=1.5pt, 
 mark=square*,
 mark options={solid,fill=white,draw=red}, 
 mark repeat={70}, 
 ]  
file { data/log3_MAPG.dat };

 \addplot [ 
 color=red, 
 dashed, 
 line width=1.0pt, 
 mark size=1.9pt, 
 mark=triangle*,
 mark options={solid,fill=white,draw=red}, 
 mark repeat={70}, 
 ] 
file { data/log3_MFISTA.dat };

 \addplot [ 
 color=red, 
 solid, 
 line width=1.0pt, 
 mark size=1.8pt, 
 mark=none,
 mark options={solid,fill=white,draw=red}, 
 mark repeat={70}, 
 ] 
file { data/log3_FISTA.dat };

 \addplot [ 
 color=blue, 
 solid, 
 line width=1.5pt, 
 mark size=1.8pt, 
 mark=none,
 mark options={solid,fill=white,draw=red}, 
 mark repeat={70}, 
 ] 
file { data/log3_AI.dat };

 \addplot [ 
 color=blue, 
 solid, 
 line width=0.8pt, 
 mark size=1.8pt, 
 mark=none,
 mark options={solid,fill=white,draw=red}, 
 mark repeat={70}, 
 ]  
file { data/log3_AI2.dat };

\nextgroupplot[ 
 width=0.45\columnwidth, 
 height=0.3\columnwidth, 
 xmin=0, 
 xmax=450, 
 xmajorgrids, 
 xlabel={Number of iterations},
 x label style={at={(axis description cs:0.5,-0.22)},anchor=north},
 ymode=log,
 ymin=5e-16, 
 ymax=2, 
 yminorticks=true, 
 ymajorgrids, 
 yminorgrids, 
 ylabel={Functional error }
 ]

 \addplot [ 
 color=gray, 
 solid, 
 line width=2.0pt, 
 mark size=2.5pt, 
 mark=none
 ] 
file { data/log2_PG.dat };

 \addplot [ 
 color=red, 
 dashed, 
 line width=1.0pt, 
 mark size=1.5pt, 
 mark=square*,
 mark options={solid,fill=white,draw=red}, 
 mark repeat={70}, 
 ]  
file { data/log2_MAPG.dat };

 \addplot [ 
 color=red, 
 dashed, 
 line width=1.0pt, 
 mark size=1.9pt, 
 mark=triangle*,
 mark options={solid,fill=white,draw=red}, 
 mark repeat={70}, 
 ] 
file { data/log2_MFISTA.dat };

 \addplot [ 
 color=red, 
 solid, 
 line width=1.0pt, 
 mark size=1.8pt, 
 mark=none,
 mark options={solid,fill=white,draw=red}, 
 mark repeat={70}, 
 ] 
file { data/log2_FISTA.dat };

 \addplot [ 
 color=blue, 
 solid, 
 line width=1.5pt, 
 mark size=1.8pt, 
 mark=none,
 mark options={solid,fill=white,draw=red}, 
 mark repeat={70}, 
 ] 
file { data/log2_AI.dat };

 \addplot [ 
 color=blue, 
 solid, 
 line width=0.8pt, 
 mark size=1.8pt, 
 mark=none,
 mark options={solid,fill=white,draw=red}, 
 mark repeat={70}, 
 ]  
file { data/log2_AI2.dat };

\nextgroupplot[ 
 width=0.45\columnwidth, 
 height=0.3\columnwidth, 
 xmin=0, 
 xmax=450, 
 xmajorgrids, 
 xlabel={Number of iterations},
 x label style={at={(axis description cs:0.5,-0.22)},anchor=north},
 ymode=log,
 ymin=5e-16, 
 ymax=2, 
 yminorticks=true, 
 ymajorgrids, 
 yminorgrids, 
 ]

 \addplot [ 
 color=gray, 
 solid, 
 line width=2.0pt, 
 mark size=2.5pt, 
 mark=none
 ] 
file { data/log1_PG.dat };

 \addplot [ 
 color=red, 
 dashed, 
 line width=1.0pt, 
 mark size=1.5pt, 
 mark=square*,
 mark options={solid,fill=white,draw=red}, 
 mark repeat={70}, 
 ]  
file { data/log1_MAPG.dat };

 \addplot [ 
 color=red, 
 dashed, 
 line width=1.0pt, 
 mark size=1.9pt, 
 mark=triangle*,
 mark options={solid,fill=white,draw=red}, 
 mark repeat={70}, 
 ] 
file { data/log1_MFISTA.dat };

 \addplot [ 
 color=red, 
 solid, 
 line width=1.0pt, 
 mark size=1.8pt, 
 mark=none,
 mark options={solid,fill=white,draw=red}, 
 mark repeat={70}, 
 ] 
file { data/log1_FISTA.dat };

 \addplot [ 
 color=blue, 
 solid, 
 line width=1.5pt, 
 mark size=1.8pt, 
 mark=none,
 mark options={solid,fill=white,draw=red}, 
 mark repeat={70}, 
 ] 
file { data/log1_AI.dat };

 \addplot [ 
 color=blue, 
 solid, 
 line width=0.8pt, 
 mark size=1.8pt, 
 mark=none,
 mark options={solid,fill=white,draw=red}, 
 mark repeat={70}, 
 ]  
file { data/log1_AI2.dat };

\end{groupplot}

\node[anchor=north] at ($(plot1 c1r1.south)+(0,-0.2)$) { (a) $\gamma = 1/L_u$ };
\node[anchor=north] at ($(plot1 c2r1.south)+(0,-0.2)$) { (b) $\gamma = \gamma_{\max}/8$ };
\node[anchor=north] at ($(plot1 c1r2.south)+(0,-0.2)$) { (c) $\gamma = \gamma_{\max}/3$ };
\node[anchor=north] at ($(plot1 c2r2.south)+(0,-0.2)$) { (d) $\gamma = \gamma_{\max}/1.5$ };

\path (plot1 c1r1.north west|-current bounding box.north)--
      coordinate(legendpos)
      (plot1 c2r1.north east|-current bounding box.north);
\matrix[
    matrix of nodes,
    anchor=south,
    draw,
    inner sep=0.2em,
    draw
  ]at([yshift=1ex]legendpos)
  {
    \ref{plots1:PG}& Proximal Gradient &[5pt]
\ref{plots1:AI}& Alt. Iner. $d=0.8$ &[5pt]
\ref{plots1:AI2}& Alt. Ext.  \\
\ref{plots1:FISTA}& FISTA &[5pt]
\ref{plots1:MFISTA}& MFISTA &[5pt]
\ref{plots1:MAPG}& MAPG \\
};
\end{tikzpicture}
\caption{Comparison of the algorithms on $\ell_1$-regularized logistic regression (functional decrease vs number of iterations). The four cases correspond to four different choices of stepsize $\gamma$.\label{fig:log}}
\end{figure*}
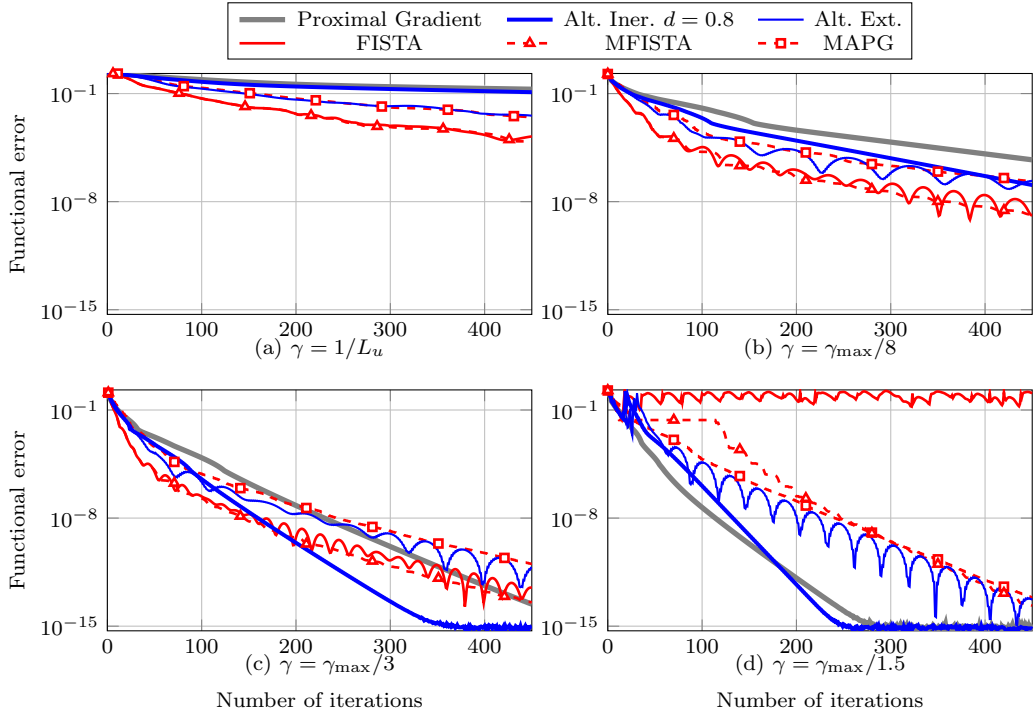

\subsection{$\ell_1$-Regularized Least-Squares (Lasso)}
\label{sec:num_lasso}

We address the lasso problem 
$$ \min_{x\in\mathbb{R}^n } \underbrace{ \left\|Ax-b\right\|_2^2}_{f(x) } +  \underbrace{ \lambda_1 \|x\|_1 }_{g(x)}$$
on synthetic matrix/vector couples $A\in\mathbb{R}^{m\times n}$ and $b\in \mathbb{R}^m$. $A$ is drawn from the standard normal distribution and $b = A x_0 + e$ where $x_0$ is a $10$\% sparse vector taken from the normal distribution, and $e$ is taken from the normal distribution with standard deviation $0.001$. We set $\lambda_1$ so that the original sparsity is ultimately recovered. 

The interest of this problem is that we can compute exactly the standard stepsize $\gamma=1/L$ where $L=2\|A^{\mathrm{T}}A\|$, which we use for all algorithms.
We plot the functional error for all compared algorithms with two different sizes of the matrix $A$, which correspond to two conditioning levels of the lasso  problem: (a) $130\times 80$ and (b)  $85\times 80 $.

In Figure~\ref{fig:lasso}, we observe that the proximal gradient and the alternated inertial counterpart benefit from a more-than-polynomial rate which enables them to outperform most other methods in the better conditioned case, the alternated inertial version being always significantly better than the vanilla one. In the second case, FISTA, MFISTA, and MTwist perform best although alternated inertia is still competitive.


\begin{figure*}[ht!]
\begin{tikzpicture}[scale=1]
\begin{groupplot}[group style={group name=plot2, group size= 2 by 1},width=\textwidth]

\nextgroupplot[ 
 width=0.45\columnwidth, 
 height=0.3\columnwidth, 
 xmin=0, 
 xmax=1000, 
 xmajorgrids, 
 ymode=log,
 ymin=5e-16, 
 ymax=2, 
 yminorticks=true, 
 ymajorgrids, 
 yminorgrids, 
 ylabel={Functional error },
 xlabel={Number of iterations},
 x label style={at={(axis description cs:0.5,-0.22)},anchor=north},
 ]

 \addplot [  each nth point=5, filter discard warning=false, unbounded coords=discard, 
 color=gray, 
 solid, 
 line width=2.0pt, 
 mark size=2.5pt, 
 mark=none
 ] 
file { data/lasso_80_130_PG.dat };
 \label{plots1:PG}

 \addplot [  each nth point=5, filter discard warning=false, unbounded coords=discard, 
 color=green!50!black, 
 dashed, 
 line width=1.0pt, 
 mark size=1.6pt, 
 mark=*,
 mark options={solid,fill=white,draw=green!50!black}, 
 mark repeat={10}, 
 ]  
file { data/lasso_80_130_MTWIST.dat };
 \label{plots1:MTWIST}

 \addplot [  each nth point=5, filter discard warning=false, unbounded coords=discard, 
 color=red, 
 dashed, 
 line width=1.0pt, 
 mark size=1.5pt, 
 mark=square*,
 mark options={solid,fill=white,draw=red}, 
 mark repeat={10}, 
 ]  
file { data/lasso_80_130_MAPG.dat };
 \label{plots1:MAPG}

 \addplot [  each nth point=5, filter discard warning=false, unbounded coords=discard, 
 color=red, 
 dashed, 
 line width=1.0pt, 
 mark size=1.9pt, 
 mark=triangle*,
 mark options={solid,fill=white,draw=red}, 
 mark repeat={10}, 
 ] 
file { data/lasso_80_130_MFISTA.dat };
 \label{plots1:MFISTA}

 \addplot [  each nth point=5, filter discard warning=false, unbounded coords=discard, 
 color=red, 
 solid, 
 line width=1.0pt, 
 mark size=1.8pt, 
 mark=none,
 mark options={solid,fill=white,draw=red}, 
 mark repeat={10}, 
 ] 
file { data/lasso_80_130_FISTA.dat };
 \label{plots1:FISTA}

 \addplot [  each nth point=5, filter discard warning=false, unbounded coords=discard, 
 color=blue, 
 solid, 
 line width=1.5pt, 
 mark size=1.8pt, 
 mark=none,
 mark options={solid,fill=white,draw=red}, 
 mark repeat={10}, 
 ] 
file { data/lasso_80_130_AI.dat };
 \label{plots1:AI}

 \addplot [  each nth point=5, filter discard warning=false, unbounded coords=discard, 
 color=blue, 
 solid, 
 line width=0.8pt, 
 mark size=1.8pt, 
 mark=none,
 mark options={solid,fill=white,draw=red}, 
 mark repeat={10}, 
 ]  
file { data/lasso_80_130_AI2.dat };
 \label{plots1:AI2}

\nextgroupplot[ 
 width=0.45\columnwidth, 
 height=0.3\columnwidth, 
 xmin=0, 
 xmax=9000, 
 xmajorgrids, 
 ymode=log,
 ymin=5e-16, 
 ymax=2, 
 yminorticks=true, 
 ymajorgrids, 
 yminorgrids, 
 xlabel={Number of iterations},
 x label style={at={(axis description cs:0.5,-0.22)},anchor=north},
 ]

 \addplot [  each nth point=25, filter discard warning=false, unbounded coords=discard, 
 color=gray, 
 solid, 
 line width=2.0pt, 
 mark size=2.5pt, 
 mark=none
 ] 
file { data/lasso_80_80_PG.dat };

 \addplot [  each nth point=25, filter discard warning=false, unbounded coords=discard, 
 color=green!50!black, 
 dashed, 
 line width=1.0pt, 
 mark size=1.6pt, 
 mark=*,
 mark options={solid,fill=white,draw=green!50!black}, 
 mark repeat={10}, 
 ]  
file { data/lasso_80_80_MTWIST.dat };

 \addplot [  each nth point=25, filter discard warning=false, unbounded coords=discard, 
 color=red, 
 dashed, 
 line width=1.0pt, 
 mark size=1.5pt, 
 mark=square*,
 mark options={solid,fill=white,draw=red}, 
 mark repeat={10}, 
 ]  
file { data/lasso_80_80_MAPG.dat };

 \addplot [  each nth point=25, filter discard warning=false, unbounded coords=discard, 
 color=red, 
 dashed, 
 line width=1.0pt, 
 mark size=1.9pt, 
 mark=triangle*,
 mark options={solid,fill=white,draw=red}, 
 mark repeat={10}, 
 ] 
file { data/lasso_80_80_MFISTA.dat };

 \addplot [  each nth point=25, filter discard warning=false, unbounded coords=discard, 
 color=red, 
 solid, 
 line width=1.0pt, 
 mark size=1.8pt, 
 mark=none,
 mark options={solid,fill=white,draw=red}, 
 mark repeat={10}, 
 ] 
file { data/lasso_80_80_FISTA.dat };

 \addplot [  each nth point=25, filter discard warning=false, unbounded coords=discard, 
 color=blue, 
 solid, 
 line width=1.5pt, 
 mark size=1.8pt, 
 mark=none,
 mark options={solid,fill=white,draw=red}, 
 mark repeat={10}, 
 ] 
file { data/lasso_80_80_AI.dat };

 \addplot [  each nth point=25, filter discard warning=false, unbounded coords=discard, 
 color=blue, 
 solid, 
 line width=0.8pt, 
 mark size=1.8pt, 
 mark=none,
 mark options={solid,fill=white,draw=red}, 
 mark repeat={10}, 
 ]  
file { data/lasso_80_80_AI2.dat };

\end{groupplot}

\node[anchor=north] at ($(plot2 c1r1.south)+(0,-0.2)$) { (a) $130 \times 80$ };
\node[anchor=north] at ($(plot2 c2r1.south)+(0,-0.2)$) { (b) $85 \times 80$ };

\path (plot2 c1r1.north west|-current bounding box.north)--
      coordinate(legendpos)
      (plot2 c2r1.north east|-current bounding box.north);
\matrix[
    matrix of nodes,
    anchor=south,
    draw,
    inner sep=0.2em,
    draw
  ]at([yshift=1ex]legendpos)
  {
    \ref{plots1:PG}& Proximal Gradient &[5pt]
\ref{plots1:AI}& Alt. Iner. $d=0.8$ &[5pt]
\ref{plots1:AI2}& Alt. Ext.  \\
\ref{plots1:FISTA}& FISTA &[5pt]
\ref{plots1:MFISTA}& MFISTA &[5pt]
\ref{plots1:MAPG}& MAPG &[5pt]
\ref{plots1:MTWIST}& MTwist\\
};
\end{tikzpicture}
\caption{Comparison of the algorithms on lasso (functional decrease vs number of iterations). The two cases correspond to two problems with two different strong convexity modulus (small and large).\label{fig:lasso}}
\end{figure*}
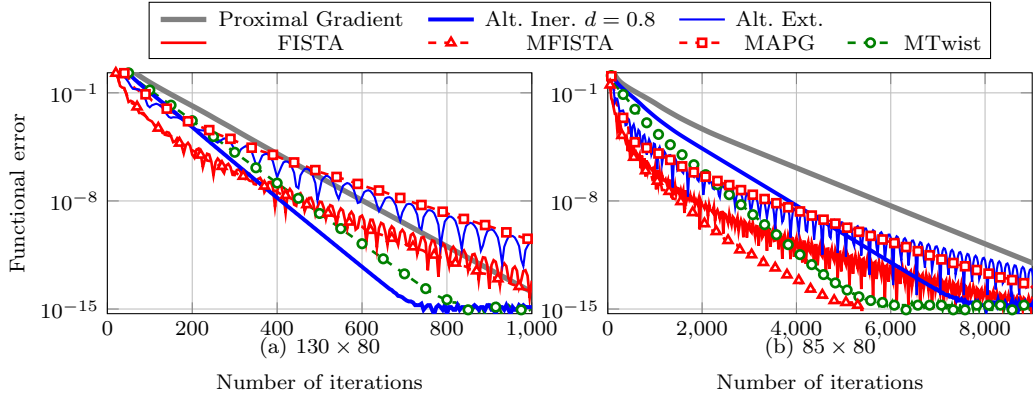

We also checked the wall clock times obtained by IPython's \texttt{timeit} in the situation of Fig.~\ref{fig:lasso} (a) and found that alternated inertia and FISTA roughly take the same the time as the proximal gradient while test-based MFISTA, MAPG, and MTwist are slightly more time consuming due to function evaluations; thus the presented comparisons are more than fair for alternated inertia.


\subsection{Non-Convex Problems}
\label{sec:num_ncvx}

In order to investigate the behavior of the compared algorithms in a non-convex setting, we replace the $\ell_1$ norm in the two previous problems by the $\ell_{0.5}$ ``\emph{norm}''. This regularization promotes sparsity in a stronger, non-convex, sense than the $\ell_1$ norm. This function writes $\|x\|_{0.5}^{0.5} = \sum_{i=1}^n \sqrt{|x_i|}$ and has the attractive feature of having a closed form proximal operator (see \cite{chartrand2016nonconvex} and references therein). For these non-convex problems, we do not plot the functional error, as the algorithms may reach different local minimizers depending on the initialization point, but rather: (a) the functional error with respect to the minimal value reached by the algorithm on the run; and (b) the distance of the subgradient to the null vector $\text{dist}^2(0,\partial F(x_k))$.
The two figures show that alternated inertia provides a quick and monotonic convergence. 



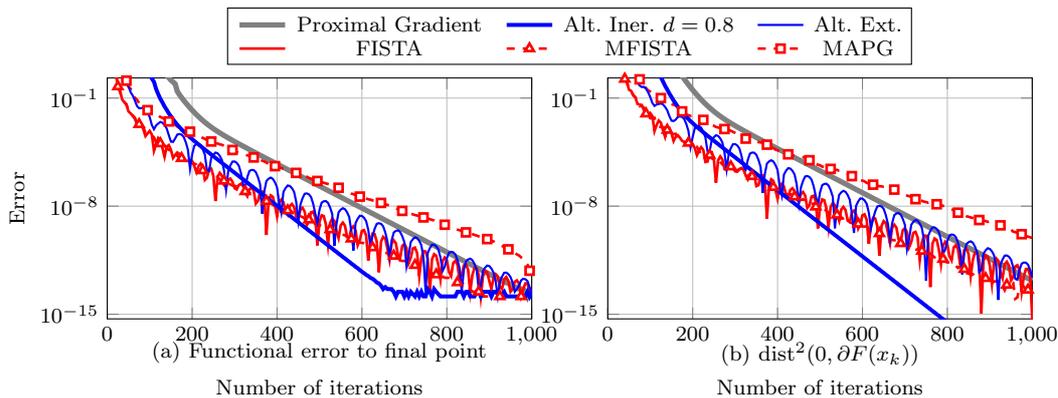
\begin{figure*}[ht!]
\begin{tikzpicture}[scale=1]
\begin{groupplot}[group style={group name=plot2, group size= 2 by 1},width=\textwidth]

\nextgroupplot[ 
 width=0.45\columnwidth, 
 height=0.3\columnwidth, 
 xmin=0, 
 xmax=1000, 
 xmajorgrids, 
 ymode=log,
 ymin=5e-16, 
 ymax=2, 
 yminorticks=true, 
 ymajorgrids, 
 yminorgrids, 
 ylabel={Error},
 xlabel={Number of iterations},
 x label style={at={(axis description cs:0.5,-0.22)},anchor=north},
 ]

 \addplot [  each nth point=5, filter discard warning=false, unbounded coords=discard, 
 color=gray, 
 solid, 
 line width=2.0pt, 
 mark size=2.5pt, 
 mark=none
 ] 
file { data/ncvx2F_PG.dat };
 \label{plots1:PG}

 \addplot [  each nth point=5, filter discard warning=false, unbounded coords=discard, 
 color=red, 
 dashed, 
 line width=1.0pt, 
 mark size=1.5pt, 
 mark=square*,
 mark options={solid,fill=white,draw=red}, 
 mark repeat={10}, 
 ]  
file { data/ncvx2F_MAPG.dat };
 \label{plots1:MAPG}

 \addplot [  each nth point=5, filter discard warning=false, unbounded coords=discard, 
 color=red, 
 dashed, 
 line width=1.0pt, 
 mark size=1.9pt, 
 mark=triangle*,
 mark options={solid,fill=white,draw=red}, 
 mark repeat={10}, 
 ] 
file { data/ncvx2F_MFISTA.dat };
 \label{plots1:MFISTA}

 \addplot [  each nth point=5, filter discard warning=false, unbounded coords=discard, 
 color=red, 
 solid, 
 line width=1.0pt, 
 mark size=1.8pt, 
 mark=none,
 mark options={solid,fill=white,draw=red}, 
 mark repeat={10}, 
 ] 
file { data/ncvx2F_FISTA.dat };
 \label{plots1:FISTA}

 \addplot [  each nth point=5, filter discard warning=false, unbounded coords=discard, 
 color=blue, 
 solid, 
 line width=1.5pt, 
 mark size=1.8pt, 
 mark=none,
 mark options={solid,fill=white,draw=red}, 
 mark repeat={10}, 
 ] 
file { data/ncvx2F_AI.dat };
 \label{plots1:AI}

 \addplot [  each nth point=5, filter discard warning=false, unbounded coords=discard, 
 color=blue, 
 solid, 
 line width=0.8pt, 
 mark size=1.8pt, 
 mark=none,
 mark options={solid,fill=white,draw=red}, 
 mark repeat={10}, 
 ]  
file { data/ncvx2F_AI2.dat };
 \label{plots1:AI2}

\nextgroupplot[ 
 width=0.45\columnwidth, 
 height=0.3\columnwidth, 
 xmin=0, 
 xmax=1000, 
 xmajorgrids, 
 ymode=log,
 ymin=5e-16, 
 ymax=2, 
 yminorticks=true, 
 ymajorgrids, 
 yminorgrids, 
 xlabel={Number of iterations},
 x label style={at={(axis description cs:0.5,-0.22)},anchor=north},
 ]

 \addplot [  each nth point=5, filter discard warning=false, unbounded coords=discard, 
 color=gray, 
 solid, 
 line width=2.0pt, 
 mark size=2.5pt, 
 mark=none
 ] 
file { data/ncvx2E_PG.dat };

 \addplot [  each nth point=5, filter discard warning=false, unbounded coords=discard, 
 color=red, 
 dashed, 
 line width=1.0pt, 
 mark size=1.5pt, 
 mark=square*,
 mark options={solid,fill=white,draw=red}, 
 mark repeat={10}, 
 ]  
file { data/ncvx2E_MAPG.dat };

 \addplot [  each nth point=5, filter discard warning=false, unbounded coords=discard, 
 color=red, 
 dashed, 
 line width=1.0pt, 
 mark size=1.9pt, 
 mark=triangle*,
 mark options={solid,fill=white,draw=red}, 
 mark repeat={10}, 
 ] 
file { data/ncvx2E_MFISTA.dat };

 \addplot [  each nth point=5, filter discard warning=false, unbounded coords=discard, 
 color=red, 
 solid, 
 line width=1.0pt, 
 mark size=1.8pt, 
 mark=none,
 mark options={solid,fill=white,draw=red}, 
 mark repeat={10}, 
 ] 
file { data/ncvx2E_FISTA.dat };

 \addplot [  each nth point=5, filter discard warning=false, unbounded coords=discard, 
 color=blue, 
 solid, 
 line width=1.5pt, 
 mark size=1.8pt, 
 mark=none,
 mark options={solid,fill=white,draw=red}, 
 mark repeat={10}, 
 ] 
file { data/ncvx2E_AI.dat };

 \addplot [  each nth point=5, filter discard warning=false, unbounded coords=discard, 
 color=blue, 
 solid, 
 line width=0.8pt, 
 mark size=1.8pt, 
 mark=none,
 mark options={solid,fill=white,draw=red}, 
 mark repeat={10}, 
 ]  
file { data/ncvx2E_AI2.dat };

\end{groupplot}

\node[anchor=north] at ($(plot2 c1r1.south)+(0,-0.2)$) { (a) Functional error to final point };
\node[anchor=north] at ($(plot2 c2r1.south)+(0,-0.2)$) { (b) $\text{dist}^2(0,\partial F(x_k))$ };

\path (plot2 c1r1.north west|-current bounding box.north)--
      coordinate(legendpos)
      (plot2 c2r1.north east|-current bounding box.north);
\matrix[
    matrix of nodes,
    anchor=south,
    draw,
    inner sep=0.2em,
    draw
  ]at([yshift=1ex]legendpos)
  {
    \ref{plots1:PG}& Proximal Gradient &[5pt]
\ref{plots1:AI}& Alt. Iner. $d=0.8$ &[5pt]
\ref{plots1:AI2}& Alt. Ext.  \\
\ref{plots1:FISTA}& FISTA &[5pt]
\ref{plots1:MFISTA}& MFISTA &[5pt]
\ref{plots1:MAPG}& MAPG \\
};
\end{tikzpicture}
\caption{Comparison of the algorithms on the $\ell_{0.5}$-regularized logistic regression problem (with $\lambda_{0.5} = 0.002$). \label{fig:ncvx1} }
\end{figure*}

\begin{figure*}[ht!]
\begin{tikzpicture}[scale=1]
\begin{groupplot}[group style={group name=plot2, group size= 2 by 1},width=\textwidth]

\nextgroupplot[ 
 width=0.45\columnwidth, 
 height=0.3\columnwidth, 
 xmin=0, 
 xmax=1000, 
 xmajorgrids, 
 ymode=log,
 ymin=5e-16, 
 ymax=2, 
 yminorticks=true, 
 ymajorgrids, 
 yminorgrids, 
 ylabel={Error},
 xlabel={Number of iterations},
 x label style={at={(axis description cs:0.5,-0.22)},anchor=north},
 ]

 \addplot [  each nth point=5, filter discard warning=false, unbounded coords=discard, 
 color=gray, 
 solid, 
 line width=2.0pt, 
 mark size=2.5pt, 
 mark=none
 ] 
file { data/ncvxF_PG.dat };
 \label{plots1:PG}

 \addplot [  each nth point=5, filter discard warning=false, unbounded coords=discard, 
 color=red, 
 dashed, 
 line width=1.0pt, 
 mark size=1.5pt, 
 mark=square*,
 mark options={solid,fill=white,draw=red}, 
 mark repeat={10}, 
 ]  
file { data/ncvxF_MAPG.dat };
 \label{plots1:MAPG}

 \addplot [  each nth point=5, filter discard warning=false, unbounded coords=discard, 
 color=red, 
 dashed, 
 line width=1.0pt, 
 mark size=1.9pt, 
 mark=triangle*,
 mark options={solid,fill=white,draw=red}, 
 mark repeat={10}, 
 ] 
file { data/ncvxF_MFISTA.dat };
 \label{plots1:MFISTA}

 \addplot [  each nth point=5, filter discard warning=false, unbounded coords=discard, 
 color=red, 
 solid, 
 line width=1.0pt, 
 mark size=1.8pt, 
 mark=none,
 mark options={solid,fill=white,draw=red}, 
 mark repeat={10}, 
 ] 
file { data/ncvxF_FISTA.dat };
 \label{plots1:FISTA}

 \addplot [  each nth point=5, filter discard warning=false, unbounded coords=discard, 
 color=blue, 
 solid, 
 line width=1.5pt, 
 mark size=1.8pt, 
 mark=none,
 mark options={solid,fill=white,draw=red}, 
 mark repeat={10}, 
 ] 
file { data/ncvxF_AI.dat };
 \label{plots1:AI}

 \addplot [  each nth point=5, filter discard warning=false, unbounded coords=discard, 
 color=blue, 
 solid, 
 line width=0.8pt, 
 mark size=1.8pt, 
 mark=none,
 mark options={solid,fill=white,draw=red}, 
 mark repeat={10}, 
 ]  
file { data/ncvxF_AI2.dat };
 \label{plots1:AI2}

\nextgroupplot[ 
 width=0.45\columnwidth, 
 height=0.3\columnwidth, 
 xmin=0, 
 xmax=1000, 
 xmajorgrids, 
 ymode=log,
 ymin=5e-16, 
 ymax=2, 
 yminorticks=true, 
 ymajorgrids, 
 yminorgrids, 
 xlabel={Number of iterations},
 x label style={at={(axis description cs:0.5,-0.22)},anchor=north},
 ]

 \addplot [  each nth point=5, filter discard warning=false, unbounded coords=discard, 
 color=gray, 
 solid, 
 line width=2.0pt, 
 mark size=2.5pt, 
 mark=none
 ] 
file { data/ncvxE_PG.dat };

 \addplot [  each nth point=5, filter discard warning=false, unbounded coords=discard, 
 color=red, 
 dashed, 
 line width=1.0pt, 
 mark size=1.5pt, 
 mark=square*,
 mark options={solid,fill=white,draw=red}, 
 mark repeat={10}, 
 ]  
file { data/ncvxE_MAPG.dat };

 \addplot [  each nth point=5, filter discard warning=false, unbounded coords=discard, 
 color=red, 
 dashed, 
 line width=1.0pt, 
 mark size=1.9pt, 
 mark=triangle*,
 mark options={solid,fill=white,draw=red}, 
 mark repeat={10}, 
 ] 
file { data/ncvxE_MFISTA.dat };

 \addplot [  each nth point=5, filter discard warning=false, unbounded coords=discard, 
 color=red, 
 solid, 
 line width=1.0pt, 
 mark size=1.8pt, 
 mark=none,
 mark options={solid,fill=white,draw=red}, 
 mark repeat={10}, 
 ] 
file { data/ncvxE_FISTA.dat };

 \addplot [  each nth point=5, filter discard warning=false, unbounded coords=discard, 
 color=blue, 
 solid, 
 line width=1.5pt, 
 mark size=1.8pt, 
 mark=none,
 mark options={solid,fill=white,draw=red}, 
 mark repeat={10}, 
 ] 
file { data/ncvxE_AI.dat };

 \addplot [  each nth point=5, filter discard warning=false, unbounded coords=discard, 
 color=blue, 
 solid, 
 line width=0.8pt, 
 mark size=1.8pt, 
 mark=none,
 mark options={solid,fill=white,draw=red}, 
 mark repeat={10}, 
 ]  
file { data/ncvxE_AI2.dat };

\end{groupplot}

\node[anchor=north] at ($(plot2 c1r1.south)+(0,-0.2)$) { (a) Functional error to final point };
\node[anchor=north] at ($(plot2 c2r1.south)+(0,-0.2)$) { (b) $\text{dist}^2(0,\partial F(x_k))$ };

\path (plot2 c1r1.north west|-current bounding box.north)--
      coordinate(legendpos)
      (plot2 c2r1.north east|-current bounding box.north);
\matrix[
    matrix of nodes,
    anchor=south,
    draw,
    inner sep=0.2em,
    draw
  ]at([yshift=1ex]legendpos)
  {
    \ref{plots1:PG}& Proximal Gradient &[5pt]
\ref{plots1:AI}& Alt. Iner. $d=0.8$ &[5pt]
\ref{plots1:AI2}& Alt. Ext.  \\
\ref{plots1:FISTA}& FISTA &[5pt]
\ref{plots1:MFISTA}& MFISTA &[5pt]
\ref{plots1:MAPG}& MAPG \\
};
\end{tikzpicture}
\caption{Comparison of the algorithms on the $\ell_{0.5}$-regularized linear regression problem (``$\ell_{0.5}$-lasso") with $m=130$ and $n=80$. To match the sparsity of $x_0$, 
$\lambda_{0.5}$ was set accordingly to $0.05$. \label{fig:ncvx2} }
\end{figure*}
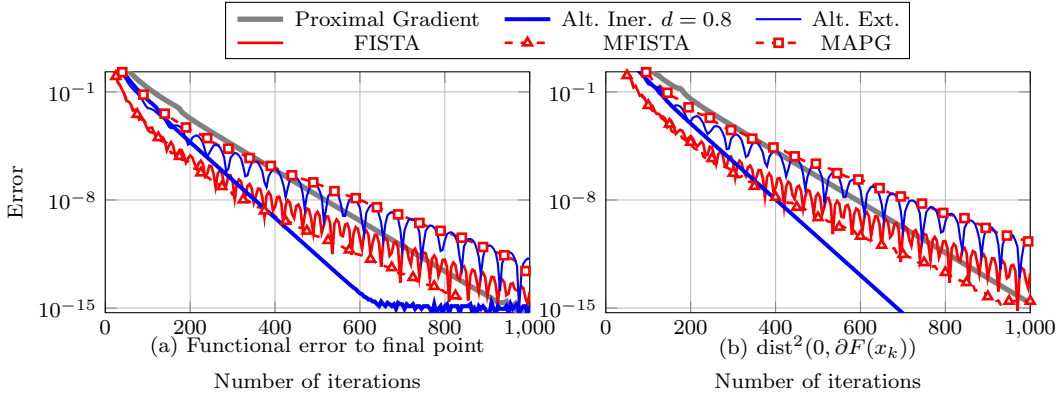

\section{Conclusion}

Standard algorithms with inertia break down monotonicity; we have proved that adding a proximal gradient step after each inertial step enables to recover monotonicity without extra assumptions on the stepsizes or on the inertial sequence. The resulting alternating inertial algorithm enjoys the accelerated behavior of inertial algorithms while keeping the good properties of vanilla proximal gradient algorithms (automatic rate adaptation for strongly convex objectives and generalization for non-convex objectives). The complexity analysis of this algorithm is also the occasion of revealing a general convergence result of weakly decreasing algorithms for minimizing sharp objective functions. 

Many extensions of this idea of intermittent inertia are possible; we studied in particular an alternated extrapolation algorithm for which we show that it converges faster as the geometry improves, going beyond the usual worst case rate.


\appendix

\section{Appendix with Known Results about the Proximal Gradient}
\label{apx:lemmas}

For the sake of completeness, we provide short and direct proofs of known lemmas recalled in Section\;\ref{sec:pre}.

\begin{proof}{\em of Lemma~\ref{lem:1}}
Let $x\in\mathbb{R}^n$; by definition, we have 
\begin{align*}
\mathsf{T}_\gamma(x) &= \argmin_w \left( \gamma g(w) + \frac{1}{2} \left\| w- \left(x - \gamma \nabla f(x) \right)   \right\|^2  \right) 
= \argmin_w \left( \underbrace{ f(x) +  g(w) +  \langle w-x  ;  \nabla f(x) \rangle +  \frac{1}{2\gamma} \left\| w- x  \right\|^2 }_{s_x(w)} \right)
\end{align*}
and, as it is defined as the minimizer of $\frac{1}{\gamma}$-strongly convex surrogate function $s_x$, we have for any $y\in\mathbb{R}^n$ that  $s_x ( \mathsf{T}_\gamma(x)) + \frac{1}{2\gamma} \| \mathsf{T}_\gamma(x) - y\|^2 \leq s_x (y)$ so
\begin{align}
\label{eq:surr}  f(x) + g(\mathsf{T}_\gamma(x) ) +   & \langle \mathsf{T}_\gamma(x) -x  ;  \nabla f(x) \rangle +  \frac{\left\|  \mathsf{T}_\gamma(x) - x  \right\|^2 }{2\gamma} +  \frac{\left\|  \mathsf{T}_\gamma(x) - y  \right\|^2 }{2\gamma} \leq f(x) + g(y) +  \langle y-x  ;  \nabla f(x) \rangle +  \frac{\left\| y- x  \right\|^2 }{2\gamma}.
\end{align}
Now we use (i) the descent lemma on $L$-smooth function $f$ (see \cite[Th.~18.15]{bauschke2011convex}) to show that
\begin{align}
\label{eq:Lsmth}
f( \mathsf{T}_\gamma(x) ) \leq  f(x)  +   \langle \mathsf{T}_\gamma(x) -x  ;  \nabla f(x) \rangle + \frac{L}{2} \left\|  \mathsf{T}_\gamma(x) - x  \right\|^2
\end{align}
and (ii) the convexity of $f$ to have 
\begin{align}
\label{eq:cvxlem}
 f(x) +  \langle y-x  ;  \nabla f(x) \rangle \leq f(y) . 
\end{align}

Using Eq.~\eqref{eq:Lsmth} on the left hand side of \eqref{eq:surr} and Eq.~\eqref{eq:cvxlem} on the right hand side, we get
\begin{align*}
\label{eq:final} f(\mathsf{T}_\gamma(x)) + g(\mathsf{T}_\gamma(x) ) +  &  \frac{ (1-\gamma L)  \left\|  \mathsf{T}_\gamma(x) - x  \right\|^2 }{2\gamma} +  \frac{\left\|  \mathsf{T}_\gamma(x) - y  \right\|^2 }{2\gamma} \leq f(y) + g(y) +    \frac{\left\| y- x  \right\|^2 }{2\gamma}.
\end{align*}\qed
\end{proof}

\begin{proof} {\em of Lemma~\ref{lem:klopt}}
Let $x\in\mathbb{R}^n$, and let $y =  \mathsf{T}_\gamma(x) \in \argmin_w \left( \gamma g(w) + \frac{1}{2} \left\| w- \left(x - \gamma \nabla f(x) \right)   \right\|^2  \right)$, then
\begin{align*}
0 \in \gamma \partial g(y) + y - x +   \gamma \nabla f(x)  ~~~~ \Leftrightarrow  ~~~~ 0 \in \nabla f(y)  + \partial g(y)  +  \nabla f(x) -  \nabla f(y) + \frac{1}{\gamma}(y-x)
\end{align*}
so $  \nabla f(y) -  \nabla f(x) + \frac{1}{\gamma}(x-y) \in \partial F(y)$, thus we have 
$ \dist(0,\partial F(y)) \leq  \|  \nabla f(y) -  \nabla f(x) + \frac{1}{\gamma}(x-y) \| \leq \left(L + \frac{1}{\gamma}\right) \|x-y\|$.
\qed
\end{proof}

\begin{lemma}
\label{lem:1noncvx}
Let Assumption~\ref{hyp:gen} hold but with $g$ possibly nonconvex, and take $\gamma>0$. Then, for any $x,y\in\mathbb{R}^n$, 
\begin{equation*}
F( \mathsf{T}_\gamma(x) ) + \frac{ (1-\gamma L)}{2\gamma}  \left\|  \mathsf{T}_\gamma(x) - x  \right\|^2    \leq F(y)  + \frac{1}{2\gamma} \left\| x - y \right\|^2.
\end{equation*}
\end{lemma}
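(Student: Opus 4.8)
The plan is to retrace, almost verbatim, the proof of Lemma~\ref{lem:1} given above and to isolate the single step that invoked the convexity of $g$. Recall that $\mathsf{T}_\gamma(x)$ is by definition a minimizer over $w$ of the surrogate
$$ s_x(w) := f(x) + g(w) + \langle w - x , \nabla f(x) \rangle + \frac{1}{2\gamma} \left\| w - x \right\|^2 , $$
the proximal operator extending verbatim to nonconvex $g$ (see \cite[Def.~1.22]{RocWet98}); if $\prox_{\gamma g}$ happens to be set-valued, the estimate below holds for any selection, and existence of a minimizer of $s_x$ follows from the lower semicontinuity of $g$ together with the coercivity furnished by the quadratic term, at least in the stepsize regime considered in Section~\ref{sec:noncvx}. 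In the convex proof, the $\tfrac{1}{\gamma}$-strong convexity of $s_x$ --- the only place where the convexity of $g$ was used --- served solely to strengthen the minimality inequality into Eq.~\eqref{eq:surr}, i.e.\ to add the term $\tfrac{1}{2\gamma}\|\mathsf{T}_\gamma(x) - y\|^2$ on its left-hand side. Without convexity of $g$, the plain minimality $s_x(\mathsf{T}_\gamma(x)) \leq s_x(y)$ still holds for every $y$, which is exactly Eq.~\eqref{eq:surr} with that last left-hand term erased.

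From this weakened version of Eq.~\eqref{eq:surr}, the remainder of the argument goes through unchanged: I would lower-bound its left-hand side using the descent inequality \eqref{eq:Lsmth}, which is valid for any $L$-smooth function regardless of convexity, and upper-bound its right-hand side using the gradient inequality \eqref{eq:cvxlem}, which still holds since the convexity of $f$ is retained in Assumption~\ref{hyp:gen}. Cancelling the common term $f(x)$ and regrouping the quadratic terms then yields exactly
$$ F(\mathsf{T}_\gamma(x)) + \frac{1 - \gamma L}{2\gamma} \left\| \mathsf{T}_\gamma(x) - x \right\|^2 \leq F(y) + \frac{1}{2\gamma} \left\| x - y \right\|^2 . $$

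I do not expect a genuine obstacle here: the statement is a strict weakening of Lemma~\ref{lem:1}, obtained by discarding a single nonnegative term, so the only point deserving a word of care is the well-posedness of $\mathsf{T}_\gamma(x)$ in the nonconvex setting --- an issue that concerns the definition of the operator and is orthogonal to the inequality itself.
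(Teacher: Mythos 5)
Your proposal is correct and follows exactly the paper's own argument: the appendix proof of Lemma~\ref{lem:1noncvx} likewise replaces the strong-convexity inequality $s_x(\mathsf{T}_\gamma(x)) + \frac{1}{2\gamma}\|\mathsf{T}_\gamma(x)-y\|^2 \leq s_x(y)$ by the plain minimality $s_x(\mathsf{T}_\gamma(x)) \leq s_x(y)$ and then reuses \eqref{eq:Lsmth} and \eqref{eq:cvxlem} unchanged. Your additional remark on the well-posedness of $\mathsf{T}_\gamma$ for nonconvex $g$ is a sensible (and harmless) supplement to what the paper states.
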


\begin{proof}
Let $x\in\mathbb{R}^n$; by definition, we have, as in the proof of Lemma~\ref{lem:1},
\begin{align*}
\mathsf{T}_\gamma(x) &= \argmin_w \left( \gamma g(w) + \frac{1}{2} \left\| w- \left(x - \gamma \nabla f(x) \right)   \right\|^2  \right) 
= \argmin_w \left( \underbrace{ f(x) +  g(w) +  \langle w-x  ;  \nabla f(x) \rangle +  \frac{1}{2\gamma} \left\| w- x  \right\|^2 }_{s_x(w)} \right)
\end{align*}
and, as it is defined as a minimizer of (non necessarily convex) surrogate function $s_x$, we have for any $y\in\mathbb{R}^n$ that  $s_x ( \mathsf{T}_\gamma(x)) \leq s_x (y)$ (which differs from from the convex case of  Lemma~\ref{lem:1}) thus
\begin{align}
\label{eq:surr2}
f(x) + g(\mathsf{T}_\gamma(x) ) +   & \langle \mathsf{T}_\gamma(x) -x  ;  \nabla f(x) \rangle +  \frac{\left\|  \mathsf{T}_\gamma(x) - x  \right\|^2 }{2\gamma}  \leq f(x) + g(y) +  \langle y-x  ;  \nabla f(x) \rangle +  \frac{\left\| y- x  \right\|^2 }{2\gamma}.
\end{align}
The proof then follows the same lines as that of Lemma~\ref{lem:1}.\qed

\end{proof}

\begin{acknowledgements}
The work of the authors is partly supported by the PGMO Grant \emph{Advanced Non-smooth Optimization Methods for Stochastic Programming}.
\end{acknowledgements}

\bibliographystyle{spmpsci_unsrt}
\bibliography{optim}

\def\cprime{$'$} \def\cdprime{$''$} \def\cprime{$'$} \def\cprime{$'$}
\begin{thebibliography}{10}
\providecommand{\url}[1]{{#1}}
\providecommand{\urlprefix}{URL }
\expandafter\ifx\csname urlstyle\endcsname\relax
  \providecommand{\doi}[1]{DOI~\discretionary{}{}{}#1}\else
  \providecommand{\doi}{DOI~\discretionary{}{}{}\begingroup
  \urlstyle{rm}\Url}\fi

\bibitem{chambolle1998nonlinear}
Chambolle, A., De~Vore, R.A., Lee, N.Y., Lucier, B.J.: Nonlinear wavelet image
  processing: variational problems, compression, and noise removal through
  wavelet shrinkage.
\newblock IEEE Transactions on Image Processing \textbf{7}(3), 319--335 (1998)

\bibitem{daubechies2004iterative}
Daubechies, I., Defrise, M., De~Mol, C.: An iterative thresholding algorithm
  for linear inverse problems with a sparsity constraint.
\newblock Communications on pure and applied mathematics \textbf{57}(11),
  1413--1457 (2004)

\bibitem{hale2008fixed}
Hale, E.T., Yin, W., Zhang, Y.: Fixed-point continuation for
  $\ell_1$-minimization: Methodology and convergence.
\newblock SIAM Journal on Optimization \textbf{19}(3), 1107--1130 (2008)

\bibitem{alvarez2004weak}
Alvarez, F.: Weak convergence of a relaxed and inertial hybrid
  projection-proximal point algorithm for maximal monotone operators in hilbert
  space.
\newblock SIAM Journal on Optimization \textbf{14}(3), 773--782 (2004)

\bibitem{lorenz2014inertial}
Lorenz, D., Pock, T.: An inertial forward-backward algorithm for monotone
  inclusions.
\newblock Journal of Mathematical Imaging and Vision \textbf{51}(2), 311--325
  (2014)

\bibitem{chambolle2015convergence}
Chambolle, A., Dossal, C.: On the convergence of the iterates of the “fast
  iterative shrinkage/thresholding algorithm”.
\newblock Journal of Optimization Theory and Applications \textbf{166}(3),
  968--982 (2015)

\bibitem{aujol2015stability}
Aujol, J.F., Dossal, C.: Stability of over-relaxations for the forward-backward
  algorithm, application to fista.
\newblock SIAM Journal on Optimization \textbf{25}(4), 2408--2433 (2015)

\bibitem{attouch2016rate}
Attouch, H., Peypouquet, J.: The rate of convergence of {Nesterov's}
  accelerated forward-backward method is actually faster than $1/k^2$.
\newblock SIAM Journal on Optimization \textbf{26}(3), 1824--1834 (2016)

\bibitem{nesterov1983method}
Nesterov, Y.: A method of solving a convex programming problem with convergence
  rate o (1/k2).
\newblock Soviet Mathematics Doklady \textbf{27}(2), 372--376 (1983)

\bibitem{guler1992new}
G{\"u}ler, O.: New proximal point algorithms for convex minimization.
\newblock SIAM Journal on Optimization \textbf{2}(4), 649--664 (1992)

\bibitem{beck2009fast}
Beck, A., Teboulle, M.: A fast iterative shrinkage-thresholding algorithm for
  linear inverse problems.
\newblock SIAM journal on imaging sciences \textbf{2}(1), 183--202 (2009)

\bibitem{nesterov2005smooth}
Nesterov, Y.: Smooth minimization of non-smooth functions.
\newblock Mathematical programming \textbf{103}(1), 127--152 (2005)

\bibitem{mainge2008convergence}
Maing{\'e}, P.E.: Convergence theorems for inertial km-type algorithms.
\newblock Journal of Computational and Applied Mathematics \textbf{219}(1),
  223--236 (2008)

\bibitem{beck2009fastb}
Beck, A., Teboulle, M.: Fast gradient-based algorithms for constrained total
  variation image denoising and deblurring problems.
\newblock IEEE Transactions on Image Processing \textbf{18}(11), 2419--2434
  (2009)

\bibitem{malitsky2016first}
Malitsky, Y., Pock, T.: A first-order primal-dual algorithm with linesearch.
\newblock arXiv preprint arXiv:1608.08883  (2016)

\bibitem{correa1993}
Correa, R., {Lemar\'echal}, C.: Convergence of some algorithms for convex
  minimization.
\newblock Mathematical Programming \textbf{62}(2), 261--275 (1993)

\bibitem{bioucas2007new}
Bioucas-Dias, J.M., Figueiredo, M.A.: A new twist: two-step iterative
  shrinkage/thresholding algorithms for image restoration.
\newblock IEEE Transactions on Image processing \textbf{16}(12), 2992--3004
  (2007)

\bibitem{fuentes2012descentwise}
Fuentes, M., Malick, J., Lemar{\'e}chal, C.: Descentwise inexact proximal
  algorithms for smooth optimization.
\newblock Computational Optimization and Applications \textbf{53}(3), 755--769
  (2012)

\bibitem{li2015accelerated}
Li, H., Lin, Z.: Accelerated proximal gradient methods for nonconvex
  programming.
\newblock In: Advances in neural information processing systems, pp. 379--387
  (2015)

\bibitem{mu2015note}
Mu, Z., Peng, Y.: A note on the inertial proximal point method.
\newblock Statistics, Optimization \& Information Computing \textbf{3}(3),
  241--248 (2015)

\bibitem{iutzeler2016generic}
Iutzeler, F., Hendrickx, J.M.: A generic linear rate acceleration of
  optimization algorithms via relaxation and inertia.
\newblock arXiv preprint arXiv:1603.05398  (2016)

\bibitem{hiriart-lemarechal-1993}
Hiriart-Urruty, J.B., {Lemar\'echal}, C.: Convex Analysis and
  Min\-im\-iz\-a\-tion Alg\-or\-ithms.
\newblock Springer Verlag, Heidelberg (1993).
\newblock Two volumes

\bibitem{bauschke2011convex}
Bauschke, H.H., Combettes, P.L.: Convex analysis and monotone operator theory
  in Hilbert spaces.
\newblock Springer Science \& Business Media (2011)

\bibitem{bolte2015error}
Bolte, J., Nguyen, T.P., Peypouquet, J., Suter, B.W.: From error bounds to the
  complexity of first-order descent methods for convex functions.
\newblock Mathematical Programming pp. 1--37 (2015)

\bibitem{thesetrong}
NGuyen, T.P.: Kurdyka-lojasiewicz and convexity: algorithms and applications.
\newblock Ph.D. thesis, Toulouse University (2017)

\bibitem{RocWet98}
Rockafellar, R.T., Wets, R.J.B.: Variational Analysis.
\newblock Springer (1998)

\bibitem{Bolte2007KL}
Bolte, J., Daniilidis, A., Lewis, A.: The Łojasiewicz inequality for nonsmooth
  subanalytic functions with applications to subgradient dynamical systems.
\newblock SIAM Journal on Optimization \textbf{17}(4), 1205--1223 (2007)

\bibitem{attouch2009convergence}
Attouch, H., Bolte, J.: On the convergence of the proximal algorithm for
  nonsmooth functions involving analytic features.
\newblock Mathematical Programming \textbf{116}(1), 5--16 (2009)

\bibitem{frankel2015splitting}
Frankel, P., Garrigos, G., Peypouquet, J.: Splitting methods with variable
  metric for kurdyka--{\l}ojasiewicz functions and general convergence rates.
\newblock Journal of Optimization Theory and Applications \textbf{165}(3),
  874--900 (2015)

\bibitem{karimi2016linear}
Karimi, H., Nutini, J., Schmidt, M.: Linear convergence of gradient and
  proximal-gradient methods under the polyak-{\l}ojasiewicz condition.
\newblock In: Joint European Conference on Machine Learning and Knowledge
  Discovery in Databases, pp. 795--811 (2016)

\bibitem{ochs2014ipiano}
Ochs, P., Chen, Y., Brox, T., Pock, T.: ipiano: Inertial proximal algorithm for
  nonconvex optimization.
\newblock SIAM Journal on Imaging Sciences \textbf{7}(2), 1388--1419 (2014)

\bibitem{liang2016multi}
Liang, J., Fadili, J., Peyr{\'e}, G.: A multi-step inertial forward-backward
  splitting method for non-convex optimization.
\newblock In: Advances in Neural Information Processing Systems, pp. 4035--4043
  (2016)

\bibitem{chartrand2016nonconvex}
Chartrand, R., Yin, W.: Nonconvex sparse regularization and splitting
  algorithms.
\newblock In: Splitting Methods in Communication, Imaging, Science, and
  Engineering, pp. 237--249. Springer (2016)

\end{thebibliography}

\end{document}